\newtheorem{theorem}{Theorem}[section]
\newtheorem{citing}[theorem]{Citation}
\newtheorem{lemma}[theorem]{Lemma}
\newtheorem{proposition}[theorem]{Proposition}
\newtheorem{corollary}[theorem]{Corollary}
\theoremstyle{definition}
\newtheorem{definition}[theorem]{Definition}
\newtheorem{example}[theorem]{Example}
\newtheorem{remark}[theorem]{Remark}
    \DeclareMathOperator {\Hom}{Hom}
   \newcommand{\Z}{\mathbb{Z}} %%% the reason for sometimes using `newcommand' instead of `DeclareMathOperator' is that the spacing with `DeclareMathOperator' might be off.
    \newcommand{\Q}{\mathbb{Q}} 
\newcommand{\N}{\mathbb{N}} 
\newcommand{\R}{\mathbb{R}}
    \DeclareMathOperator {\im}{im}
    \DeclareMathOperator {\lcm}{lcm}
\newcommand{\quer}[1]{\overline{#1}}
\newcommand{\til}[1]{\widetilde{#1}}
\newcommand{\leer}{\varnothing} %correct empty set
\newcommand{\mbf}{\mathbf}
\newcommand{\into}{\hookrightarrow}
\newcommand{\onto}{\twoheadrightarrow}
\newcommand{\nsgp}{\trianglelefteq} %normal subgroup
\newcommand{\veps}{\varepsilon} %best epsilon
\newcommand{\tFm}[1]{\mathtt{F}_{#1}} %finiteness properties
\newcommand{\tFPm}[1]{\mathtt{FP}_{#1}}
\newcommand{\tFPn}{\mathtt{FP}_{n}}
\newcommand{\tFn}{\mathtt{F}_{n}}
\numberwithin{equation}{section}
\author[L.~Molyneux]{Lewis Molyneux}
\author[B.~Nucinkis]{Brita Nucinkis}
\address{
Royal Holloway, University of London\\ 
Department of Mathematics, McCrea Building, TW20~0EX Egham, UK}
\email{\href{mailto:lewis.molyneux@rhul.ac.uk}{lewis.molyneux@rhul.ac.uk}}
\email{\href{mailto:brita.nucinkis@rhul.ac.uk}{brita.nucinkis@rhul.ac.uk}}
\author[Y.~Santos]{Yuri Santos Rego}
\thanks{\noindent While working on this project at the OvGU Magdeburg, YSR was partially supported by the \emph{Deutsche Forschungsgemeinschaft} (DFG, German Research Foundation), 314838170, GRK 2297 \emph{MathCoRe}.}
\address{University of Lincoln\\
Charlotte Scott Research Centre for Algebra, School of Mathematics and Physics\\
Brayford Pool, LN6 7TS Lincoln, UK}
\email{\href{mailto:ysantosrego@lincoln.ac.uk}{ysantosrego@lincoln.ac.uk}}
\begin{document}

\title[Sigma invariants for $F_\tau$]{The Sigma invariants for the Golden Mean Thompson group}

\begin{abstract}
We use a method of Bieri, Geoghegan and Kochloukova to calculate the BNSR-invariants for the irrational slope Thompson's group $F_{\tau}$. To do so we establish conditions under which the Sigma invariants coincide with those of a subgroup of finite index, addressing a problem posed by Strebel.
\end{abstract}

\maketitle

\section{Introduction}

The study of what is now known as the first Sigma invariant or Bieri--Neumann--Strebel invariant $\Sigma^1(G)$ for a finitely generated group $G$ goes back to \cite{BieriStrebelMetabelianSigma,BNS} and was later extended by Bieri and Renz to a sequence of homotopical invariants 
\[ \cdots \subseteq \Sigma^n(G) \subseteq \Sigma^{n-1}(G) \subseteq \cdots \subseteq \Sigma^1(G) \subseteq S(G) \]
and homological invariants
\[ \cdots \subseteq \Sigma^n(G, R) \subseteq \Sigma^{n-1}(G,R) \subseteq \cdots \subseteq \Sigma^1(G,R) \subseteq S(G),\]
where $R$ is a commutative ring; cf. \cite{RenzThesis, Homology}.

In this note we compute the Sigma invariants for the Golden Mean Thompson group $F_\tau$ defined by Cleary in \cite{Cleary}, see also \cite{Ftau}. We prove:

\begin{theorem}\label{main-thm} Let $\lambda, \rho: F_\tau \to \R$ be the characters given by:
\[\lambda(f) =\log_{\tau}(f'(0)) \quad \text{ and } \quad \rho(f) = \log_{\tau}(f'(1)).\]
Then the Sigma invariants of $F_{\tau}$ are as follows:
\begin{enumerate}
\item $\Sigma^1(F_{\tau}) =  \Sigma^1(F_{\tau}, \mathbb{Z}) = S(F_{\tau}) \, \setminus \, \{[-\lambda], [-\rho]\}$, and
\item $\Sigma^{\infty}(F_{\tau}) =\Sigma^{\infty}(F_{\tau}, \mathbb{Z}) = \Sigma^2(F_{\tau}) = \Sigma^1(F_{\tau}) \, \setminus \, \{-[a\lambda +b\rho] \mid  a,b > 0\}.$
\end{enumerate}
\end{theorem}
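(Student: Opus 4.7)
The plan is to combine the BGK (Bieri--Geoghegan--Kochloukova) approach with the finite-index transfer principle announced in the abstract. First I would verify that $\ab{F_\tau}$ has rank two, so that $\lambda$ and $\rho$ form a basis of $\Hom(F_\tau,\R)$; this identifies the character sphere $S(F_\tau)$ with a circle and pins down $[-\lambda]$ and $[-\rho]$ as two distinguished antipodes in it.

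Next I would transfer the problem to a carefully chosen finite-index subgroup $H \leq F_\tau$ whose Sigma invariants are easier to handle, using the finite-index transfer theorem proved earlier in the paper. For this I would ensure that $H$ contains the commutator subgroup $[F_\tau,F_\tau]$, so that $S(H)$ is naturally identified with $S(F_\tau)$, and then check the technical hypotheses of the transfer theorem. Once this is done, the result gives $\Sigma^n(F_\tau) = \Sigma^n(H)$ and $\Sigma^n(F_\tau,\Z) = \Sigma^n(H,\Z)$ under the identification of character spheres, reducing both parts of the theorem to computations for $H$.

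For the computation on $H$ I would follow the BGK template. Let $X$ be a contractible CW-complex on which $H$ acts cocompactly with finite cell stabilisers, modelled on a Stein--Farley-style complex adapted to $F_\tau$. For a nonzero character $\chi = a\lambda + b\rho$, equip $X$ with a discrete Morse function whose sublevel sets are controlled by $\chi$; by the standard Brown/Bestvina--Brady machinery, one then has $[\chi] \in \Sigma^n(H)$ (respectively $\Sigma^n(H,\Z)$) if and only if the relevant descending links are $(n-1)$-connected (respectively $(n-1)$-acyclic). The expected dichotomy is then: outside $\{[-\lambda],[-\rho]\}$ the first descending link is connected, yielding part (1); for $\chi = -(a\lambda+b\rho)$ with $a,b>0$ it is connected but not simply connected, giving one inclusion in part (2); and for all remaining characters it is $\infty$-connected, yielding $\Sigma^\infty = \Sigma^2$ and the other inclusion. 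The coincidence of the homotopical and homological invariants follows from the stronger acyclicity statement.

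The main obstacle will be the descending-link analysis for $F_\tau$, which is genuinely different from the classical case of Thompson's $F$: the golden-mean subdivision admits two distinct caret types, so the combinatorics of ascending chains and of descending links must be redone from scratch before the BGK connectivity estimates can be invoked. A secondary difficulty is verifying the hypotheses of the finite-index transfer theorem for the chosen $H$, and in particular establishing the sharpness statements at the excluded antipodes $[-\lambda], [-\rho]$ and on the excluded arc $\{-[a\lambda+b\rho] \mid a,b>0\}$, where an explicit construction --- for instance an HNN or semidirect-product decomposition detecting that the appropriate kernels fail to be finitely generated or finitely presented --- will be needed to show that these characters lie outside the corresponding Sigma invariant.
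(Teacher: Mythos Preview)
Your proposal diverges substantively from the paper, and in a way that also reveals a mislabelling. What you call ``the BGK template'' is not the Bieri--Geoghegan--Kochloukova method: BGK do not use a Stein--Farley complex, Morse functions, or descending links. Their argument, which the paper follows closely, is purely structural. The paper exhibits a specific index-two subgroup $K=\langle x_0,x_1,y_1,x_2,y_2,\ldots\rangle$, checks that $r_0(K_{\mathrm{ab}})=r_0((F_\tau)_{\mathrm{ab}})=2$, and proves that $K$ is an \emph{ascending HNN extension} of $F_\tau[1]\cong F_\tau$ with stable letter $x_0^{-1}$. Then BGK's ready-made HNN results (their Theorems~2.1 and~2.3) immediately place $[\lambda],[\rho]$ in $\Sigma^\infty(K)$, exclude $[-\lambda],[-\rho]$ from $\Sigma^1(K)$, and sweep the entire long arc into $\Sigma^\infty(K)$; the finite-index transfer theorem moves these statements to $F_\tau$. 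The short arc $\mathrm{conv}([-\lambda],[-\rho])$ is excluded by a separate BGK argument (their Theorem~2.7 plus the kernel criterion): assuming $\ker\chi$ were $\tFPm{2}$ forces an ascending HNN decomposition whose stable letter must expand at both endpoints, contradicting $a,b>0$. No cell complex beyond the abstract $EG$ in the definition of $\Sigma^n$ is ever touched.

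The Morse-theoretic route you sketch is closer to Zaremsky's or Witzel--Zaremsky's work and could in principle succeed, but it would require exactly the new two-caret descending-link combinatorics you identify as the main obstacle---work the paper completely sidesteps. Note also that in your framework the finite-index passage is unmotivated: if you are going to analyse descending links on a complex for $F_\tau$, there is no evident gain in first passing to a subgroup. In the paper the transfer is the crux, because the clean ascending-HNN structure lives on $K$, not on $F_\tau$ itself (the generator $y_0$ obstructs writing $F_\tau$ directly as such an extension). So your plan is a plausible alternative programme, but it is not BGK, not what the paper does, and would demand substantially more work than the paper's argument.
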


Note that $\Sigma^1(F_\tau)$ was already known, see \cref{Sigma1-thm} below. {The computation of $\Sigma^1$ and higher Sigma invariants is of interest for various topological reasons; see, for instance, \cite{BNS, Homology, MMvW, SigmaF, StrebelNotes, KobanWong, DawidPolytopes, YashMattBNSR}. Particularly, we obtain the following information about coabelian subgroups of $F_\tau$.}

\begin{corollary} \label{cor:normal}
Let $N \nsgp F_\tau$ be a normal subgroup of homological type $\tFPm{2}$ for which the quotient $F_\tau / N$ is abelian. Then $N$ is of homotopical type $\tFm{\infty}$.
\end{corollary}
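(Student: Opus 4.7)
The plan is to apply the Bieri--Renz theorem characterising finiteness properties of coabelian normal subgroups via Sigma invariants: for $N \nsgp G$ with $G/N$ abelian, $N$ is of type $\tFPm{n}$ (resp.\ $\tFm{n}$) if and only if the vanishing subsphere
\[S(G, N) := \{[\chi] \in S(G) \mid \chi(N) = 0\}\]
is contained in $\Sigma^n(G, \mathbb{Z})$ (resp.\ $\Sigma^n(G)$). Crucially, $S(G, N)$ is antipodally symmetric, because $\chi(N) = 0$ if and only if $(-\chi)(N) = 0$.

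Applied to our situation, the hypothesis that $N$ is of type $\tFPm{2}$ first yields $S(F_\tau, N) \subseteq \Sigma^2(F_\tau, \mathbb{Z}) \subseteq \Sigma^1(F_\tau, \mathbb{Z}) = \Sigma^1(F_\tau)$ by \cref{main-thm}, and combining with antipodal symmetry also excludes the four points $[\pm \lambda], [\pm \rho]$ from $S(F_\tau, N)$. By the $\tFm{\infty}$ half of Bieri--Renz, the target is the stronger inclusion $S(F_\tau, N) \subseteq \Sigma^\infty(F_\tau) = \Sigma^2(F_\tau)$, where the last equality is part of \cref{main-thm}. Since $\Sigma^2(F_\tau) = \Sigma^1(F_\tau) \setminus \{-[a\lambda + b\rho] \mid a, b > 0\}$, what remains is to rule out that $S(F_\tau, N)$ meets this ``third quadrant'' open arc.

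The cleanest route leans on the Bieri--Geoghegan--Kochloukova Morse-theoretic proof of \cref{main-thm}, which simultaneously treats the homotopical and $\mathbb{Z}$-homological invariants and yields the sharper identification $\Sigma^n(F_\tau, \mathbb{Z}) = \Sigma^n(F_\tau)$ at every level $n$; in particular $\Sigma^2(F_\tau, \mathbb{Z}) = \Sigma^2(F_\tau) = \Sigma^\infty(F_\tau)$. Granted this, the containment $S(F_\tau, N) \subseteq \Sigma^\infty(F_\tau)$ is immediate from $\tFPm{2}$, and a second invocation of Bieri--Renz delivers that $N$ is of type $\tFm{\infty}$.

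The main obstacle is precisely the identification $\Sigma^2(F_\tau, \mathbb{Z}) = \Sigma^2(F_\tau)$ on the third quadrant arc. If one prefers to avoid extracting it from the main proof, the fallback route is to use antipodal symmetry to force $S(F_\tau, N) \subseteq \Sigma^2(F_\tau, \mathbb{Z}) \cap {-\Sigma^2(F_\tau, \mathbb{Z})}$, and then to argue that any hypothetical $[-a\lambda - b\rho] \in S(F_\tau, N)$ with $a, b > 0$ would, together with its antipode $[a\lambda + b\rho]$, force a negative proportionality between $\lambda|_N$ and $\rho|_N$; one then derives a contradiction by combining the rigidity so obtained with the explicit shape of $\Sigma^2(F_\tau)$ from \cref{main-thm} and the fact that such a character has to vanish on all of $N$.
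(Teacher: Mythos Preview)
Your ``cleanest route'' is precisely the paper's one-line proof: invoke \cite[Theorem~B]{Homology} and \cite[Satz~C]{RenzThesis} together with \cref{main-thm}, noting that the proof of the latter in fact establishes $\Sigma^2(F_\tau,\Z)=\Sigma^2(F_\tau)=\Sigma^\infty(F_\tau)$. The antipodal-symmetry preamble and the vaguely sketched fallback are unnecessary (and the fallback, as written, is not a complete argument), but the core reasoning is correct and matches the paper.
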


\begin{proof}
Immediate from \cref{main-thm}, \cite[Satz~C]{RenzThesis} and \cite[Theorem~B]{Homology}.
\end{proof}

\cref{main-thm} confirms that, similarly to the case of R. Thompson's original group $F$~\cite{SigmaF}, the Sigma invariants of $F_\tau$ are determined by an integral polytope (in the sense of \cite{DawidPolytopes}). The same behaviour is seen in other Thompson groups that `resemble' $F$ (e.g., \cite{SigmaF,ZaremskySigmaThompsonHoughton,braidedVZaremskyGeometry,YashMattBNSR}), though not all of them; see \cite{F23}. 

{While no unexpected phenomenon for the Sigma invariants of $F_\tau$ is observed, their computation slightly diverges from those in the above mentioned works. More precisely,} as a first step we consider the behaviour of the Sigma invariants $\Sigma^n(G)$ under passage to subgroups of finite index {--- which, to our knowledge, was not needed so far for other Thompson groups}. Using this, the computations for the Sigma invariants for $F_\tau$ then follow from methods similar to those of Bieri--Geoghegan--Kochloukova in \cite{SigmaF}.

Throughout denote by $G$ a finitely generated group, and by $G_{ab} \cong H_1(G;\Z)$ its abelianisation. We consider nontrivial characters  $\chi \in \Hom(G, \R) \cong H^1(G;\R)$. Define an equivalence relation by $\chi \sim \chi'$ if and only if there exists an $a \in \R_{>0}$ such that $\chi = a\chi'.$ The set of equivalence classes is a sphere in $\R^n$, called the {\emph character sphere} $S(G)$. Its dimension is determined by the torsion-free rank $r_0(G_{ab})$ of $G_{ab}$ {(equivalently, the first Betti number $b_1(G)$ of the group $G$)} and given by $r_0(G_{ab})-1$; see \cite[Lemma 1.1]{Bieri}. 
Now consider the following subset of the Cayley graph $\Gamma(G)$ with respect to some finite generating set: 
 $\Gamma_\chi(G)$ is the subgraph of $\Gamma(G)$ consisting of those vertices with $\chi(g) >0,$ and edges that have both initial and terminal vertices in $\Gamma_\chi(G).$ The first homotopical Sigma invariant is now defined as
\[\Sigma^1(G) = \{ [\chi] \in S(G) \mid \Gamma_\chi(G) \text{ is connected}\}.\]

Note that this is independent of the choice of generating set for the Cayley graph \cite{BNS}. For certain groups of homeomorphisms of the real line, including Thompson's group $F$ and the Golden Mean Thompson's group $F_\tau$ we have a complete description of $\Sigma^1(G)$:

\begin{citing}[{\cite[Chapter IV, Corollary 3.4]{Bieri}}] \label{Sigma1-thm}
Let $G$ be an irreducible subgroup of the group of piecewise linear homeomorphisms of the interval $[0,1]$. Take the characters $\chi_1(g)=ln(g'(0))$ as the natural log of the right derivative of an element $g \in G$ at $0$ and $\chi_2(g) = ln(g'(1))$ as the natural log of the left derivative of that element at $1$. If $G = \ker \chi_1 \cdot \ker \chi_2$, then $ \Sigma^1(G)^c = \{[\chi_1],[\chi_2]\}$.
\end{citing}

In the 1990s, Bieri and Strebel gave a formula to compute the complement $\Sigma^1(G)^c$ using $\Sigma^1(H)^c$ and a subsphere of $S(H)$ in case $H$ is a subgroup of finite index in $G$; see \cite[Chapter~III, Proposition~2.9]{Bieri} and \cite[Proposition~B1.11]{StrebelNotes}. In higher dimensions, a related formula was recently considered by Koban--Wong in \cite{KobanWong}. In his notes \cite[Section~B1.2c]{StrebelNotes}, Strebel goes on to wonder about the applicability of this formula, and poses the following.

\begin{citing}[{\cite[Problem~B1.13]{StrebelNotes}}]
Find situations where one is interested in $\Sigma^1(G)$ with $G$ admitting a subgroup of finite index which is easier to deal with and for which $\Sigma^1$ can be computed.
\end{citing}

We give a positive contribution towards Strebel's problem and find a sufficient condition for `equality' of Sigma invariants with those of subgroups of finite index. 

\begin{theorem}\label{htpy-fi}
Let $G$ be a group of type $\tFn$ with $H \leq G$ a subgroup of finite index and write $\iota : H \into G$ for the inclusion. If $r_0(G_{ab}) = r_0(H_{ab})$, then $\iota^\ast : S(G) \to S(H)$ is a well-defined homeomorphism and for all $n$ it holds
\[\iota^\ast(\Sigma^n(G)) = \Sigma^n(H).\]
\end{theorem}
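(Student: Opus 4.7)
The first order of business is to confirm that $\iota^\ast \colon S(G) \to S(H)$ is a well-defined homeomorphism. Given $\chi \in \Hom(G, \R)$ with $\chi|_H = 0$, pick any $g \in G$; since $[G:H] < \infty$ some power $g^k$ lies in $H$, so $k\chi(g) = \chi(g^k) = 0$ forces $\chi(g) = 0$. Hence restriction of characters induces a linear injection $\Hom(G, \R) \into \Hom(H, \R)$. Under the assumption $r_0(G_{ab}) = r_0(H_{ab})$ these spaces have the same finite dimension, so the injection is actually an isomorphism of finite-dimensional real vector spaces, and it descends to a homeomorphism of the associated spheres $S(G) \to S(H)$.

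Next I would build a single CW model that simultaneously witnesses the Sigma invariants of both $G$ and $H$. Since $G$ is of type $\tFn$, let $X$ be a $K(G,1)$-complex with finite $n$-skeleton, and let $\tilde X$ be its universal cover; this is a contractible free cocompact $G$-CW complex (after restricting to the $n$-skeleton). Because $[G:H]<\infty$, the restricted $H$-action on $\tilde X$ remains free and cocompact, and $\tilde X / H \to X$ is a finite covering, so $\tilde X / H$ is a $K(H,1)$-complex with finite $n$-skeleton. In particular, $H$ is itself of type $\tFn$ so $\Sigma^n(H)$ is defined, and $\tilde X$ is a valid witness for the Sigma invariants of both groups.

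Now I would fix a height function $h \colon \tilde X \to \R$ satisfying $h(g \cdot \tilde x) = \chi(g) + h(\tilde x)$ for all $g \in G$. This is automatically a height function for the $H$-action with respect to the restricted character $\iota^\ast \chi$, and the filtration $\{\tilde X_{h \geq t}\}_{t \in \R}$ is identical regardless of whether $\tilde X$ is viewed as a $G$-CW or an $H$-CW complex. Applying the standard Bieri--Renz filtration criterion for $\Sigma^n$ in terms of essential $(n-1)$-connectedness of this filtration, one obtains the equivalence
\[ [\chi] \in \Sigma^n(G) \iff \text{filtration essentially }(n-1)\text{-connected} \iff [\iota^\ast \chi] \in \Sigma^n(H). \]
Combined with the homeomorphism from the first paragraph, this yields $\iota^\ast(\Sigma^n(G)) = \Sigma^n(H)$ for every $n$ for which $G$ is of type $\tFn$.

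The main obstacle I anticipate is to articulate the Sigma-invariant criterion in a sufficiently model-independent form so that the same filtered space $\tilde X$ genuinely serves as a witness for both groups under their two different actions; this requires invoking Renz's invariance results on the choice of classifying complex and of height function. Incidentally, the inclusion $\iota^\ast(\Sigma^n(G)) \subseteq \Sigma^n(H)$ uses only the finite-index hypothesis, whereas the rank-equality hypothesis is what upgrades the restriction map to a surjection on character spheres and thereby converts this inclusion into equality.
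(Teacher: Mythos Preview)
Your proof is correct and follows the same core strategy as the paper: both arguments exploit that a model for $EG$ with $G$-finite $n$-skeleton is automatically a model for $EH$ with $H$-finite $n$-skeleton when $[G:H]<\infty$, and then invoke Renz's essential-connectivity criterion (the paper's \cref{essential}) on a shared height function. Your execution is in fact slightly cleaner than the paper's \cref{sigmaeq}: the paper treats the two directions asymmetrically, and in the backward direction first builds an $H$-equivariant height function from coset representatives and then compares it to the $G$-equivariant height function for the lifted character; by first lifting the character via the isomorphism $\iota^\ast$ and then using the $G$-equivariant height function throughout (which is automatically $H$-equivariant for the restriction), you avoid that comparison step entirely.
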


\begin{theorem}\label{hom-fi}
Let $A$ be a $\Z G$-module of type $\tFPn$. Suppose $H \leq G$ is a subgroup of finite index and write $\iota : H \into G$ for the inclusion. If $r_0(G_{ab}) = r_0(H_{ab})$, then $\iota^\ast : S(G) \to S(H)$ is a well-defined homeomorphism and for all $n$ it holds
\[\iota^\ast(\Sigma^n(G,A)) = \Sigma^n(H,A).\]
\end{theorem}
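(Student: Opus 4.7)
My plan is to follow the strategy of \cref{htpy-fi}, making the natural homological substitutions. The homeomorphism $\iota^\ast : S(G) \to S(H)$ is purely group-theoretic and may be quoted from the proof of \cref{htpy-fi}: restriction $\Hom(G, \R) \to \Hom(H, \R)$ is injective because every $g \in G$ admits a non-trivial power in the finite-index subgroup $H$, and the equality $r_0(G_{ab}) = r_0(H_{ab})$ forces surjectivity by dimension count, whence the induced map on character spheres is a homeomorphism.

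For the equality of Sigma invariants, I would use the characterisation that $[\chi] \in \Sigma^n(G, A)$ precisely when $A$ is of type $\tFPn$ as a $\Z G_\chi$-module, where $G_\chi = \{g \in G : \chi(g) \geq 0\}$. Writing $\chi^\prime = \iota^\ast(\chi)$, the pivotal technical ingredient is a lemma that, under the finite-index hypothesis, $\Z G_\chi$ is finitely generated and free as a left $\Z H_{\chi^\prime}$-module. Granting this, the inclusion $\iota^\ast(\Sigma^n(G, A)) \subseteq \Sigma^n(H, A)$ is immediate: a partial resolution of $A$ by f.g.\ projective $\Z G_\chi$-modules restricts via $\Z H_{\chi^\prime} \hookrightarrow \Z G_\chi$ to one by f.g.\ projective $\Z H_{\chi^\prime}$-modules. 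The converse inclusion follows by induction on $n$ combined with Bieri's dimension-shifting lemma: given an epimorphism $F \onto A$ with $F$ a f.g.\ free $\Z G_\chi$-module and kernel $K$, the module $F$ remains f.g.\ free over $\Z H_{\chi^\prime}$ by the key lemma, so $\tFPn$-ness of $A$ over $\Z H_{\chi^\prime}$ forces $\tFPm{n-1}$-ness of $K$ over $\Z H_{\chi^\prime}$, which by the inductive hypothesis lifts to $\tFPm{n-1}$-ness of $K$ over $\Z G_\chi$, giving the desired $\tFPn$-ness of $A$.

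The main obstacle is the key lemma itself. The classical coset decomposition of $G$ over $H$ does not descend to the submonoids: choosing representatives $g_1, \dots, g_k$ for $H \backslash G$ lying in $G_\chi$ and decomposing $g = h g_i \in G_\chi$, one only controls $\chi^\prime(h) \geq - \max_i \chi(g_i)$ rather than $h \in H_{\chi^\prime}$. To bridge this, the plan is to use the finite generation of $H$ to choose finitely many elements $s_1, \dots, s_m \in H$ realising $\chi^\prime$-values sufficiently far below zero so that their left translates absorb the bounded "slab" of values back into $H_{\chi^\prime}$, and then to combine these translates with the $g_i$ to produce a finite free $\Z H_{\chi^\prime}$-basis of $\Z G_\chi$ contained in $G_\chi$. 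This step is where the proof most substantively differs from the homotopical argument; once it is in place, the remaining module-theoretic manipulations are essentially formal.
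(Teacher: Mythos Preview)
Your treatment of the homeomorphism $\iota^\ast : S(G) \to S(H)$ is correct and coincides with the paper's \cref{Findex 2}. The difficulty is your ``key lemma'': the assertion that $\Z G_\chi$ is finitely generated and free as a left $\Z H_{\chi'}$-module is \emph{false} whenever $\chi$ has non-discrete image. For a concrete counterexample take $G = \tfrac{1}{2}\Z \times \Z$, $H = \Z \times \Z$, and $\chi(a,b) = a + \sqrt{2}\,b$; here $[G:H]=2$ and $r_0(G_{ab})=r_0(H_{ab})=2$, yet the coset $H\cdot(\tfrac12,0)$ meets $G_\chi$ in a set whose $\chi$-values have infimum $0$ without attaining it (since $-\tfrac12 \notin \Z+\sqrt{2}\,\Z$). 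Any element $b$ of a putative finite $\Z H_{\chi'}$-generating set lying in that coset has $\chi(b)>0$, and the $\Z H_{\chi'}$-translates of $b$ all have $\chi$-value $\geq \chi(b)$; hence no finite collection reaches the elements of that coset with $\chi$-value in the nonempty interval $(0,\min_j\chi(b_j))$. Your proposed ``slab absorption'' cannot repair this: left-multiplying by elements $s_j \in H$ keeps you in the same coset and cannot push the minimum achieved $\chi$-value down to an infimum that is not attained. (Your lemma \emph{is} correct when $\chi$ is discrete, because then each coset intersected with $G_\chi$ has an attained minimal $\chi$-value, and suitably chosen coset representatives do give a free basis.)

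The paper sidesteps this entirely: it simply quotes \cref{Prop9.3MMvW}, due to Meier--Meinert--VanWyk, which proves the equivalence $[\chi|_H] \in \Sigma^n(H,A) \iff [\chi] \in \Sigma^n(G,A)$ for finite-index $H$ and $\chi|_H \neq 0$ without any freeness statement for $\Z G_\chi$ over $\Z H_{\chi'}$. Combined with \cref{Findex 2}, this yields the theorem immediately. The paper also records (\cref{rmk:Dawid}) an alternative proof via Novikov rings and a result of Fisher when $A=\Z$ or a field. If you want to produce a self-contained argument, the standard route is to use the valuation/filtration (``essential'') characterisation of $\Sigma^n(G,A)$, parallel to \cref{essential} in the homotopical case, and compare the filtrations for $G$ and $H$ on a common free resolution; a direct module-theoretic comparison of $\Z G_\chi$ and $\Z H_{\chi'}$ of the kind you attempt is not available in the non-discrete regime.
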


Recalling the definition of $\iota^\ast$, any character $\chi \in \Hom(G,\R)$ can be restricted to a character of $H$, and we set $\iota^\ast(\chi) = \chi_{|H} \in \Hom(H,\R)$. In general, this map does not induce a function between character spheres. 
Thus the above statements also mean 
that the assignment $\iota^\ast([\chi]) = [\chi_{|H}]$ can be made on the level of character spheres, and we abuse notation also denoting this map by $\iota^\ast : S(G) \to S(H)$. 
We refer the reader to \cref{3} for the proofs of Theorems~\ref{htpy-fi} and~\ref{hom-fi} --- the main issue, as should be known to experts, is whether characters of the subgroup $H$ can be extended to characters of the whole group $G$. Examples~\ref{ex:1}, \ref{ex:2}, and~\ref{ex:nonextendedcharacter} illustrate how the equalities $\iota^\ast(\Sigma^n(G)) = \Sigma^n(H)$ and $\iota^\ast(\Sigma^n(G,A)) = \Sigma^n(H,A)$ can fail. 

We note that the problems of extending characters and of computing Sigma invariants from those of given subgroups appear in various guises in the literature; see, for example, \cite{Bieri,MMvW,KobanWong,DesiLuisBNSRBB,FriedlVidussi,DesiVidussi}. However, we were unable to find explicit references of statements along the lines of Theorems~\ref{htpy-fi} and~\ref{hom-fi}. 
We will make use of the homological result in \cite{MMvW} (included here as \cref{Prop9.3MMvW} in \cref{3}) during our proof of \cref{hom-fi}. 

Expanding on some related work, the authors in \cite{KobanWong} study Sigma invariants for finite-index normal subgroups $N \trianglelefteq G$, obtaining the image of $\Sigma^n(G)$ as an intersection of $\Sigma^n(N)$ with a certain subset of $\Hom(N,\R)$ invariant under a $G/N$-action. In \cite{DesiLuisBNSRBB}, extensions of characters from coabelian normal subgroups play a central role. More recently in \cite{FriedlVidussi,DesiVidussi}, the authors give conditions under which one can extend a character from certain normal subgroups of {infinite} index. 
Our formulation of Theorems~\ref{htpy-fi} and~\ref{hom-fi} gives a simple, easy-to-check condition on the Sigma invariants for (not necessarily normal) finite-index subgroups. We also remark that, over $\Z$ or a field, \cref{hom-fi} can be alternatively proved using techniques from Novikov homology and a recent generalisation of Sikorav's theorem due to Fisher \cite{SamFImproved}; see \cref{rmk:Dawid}. Our proof, in turn, uses only elementary methods. 

We stress that neither the equality $b_1(G) = r_0(G) = r_0(H) = b_1(H)$ nor finite index alone suffice as hypotheses, as the following examples show.

\begin{example} \label{ex:1}
Note that $r_0(G) = r_0(H)$ is insufficient to show an embedding of character spheres via $\iota^\ast$. As a counterexample, consider Thompson's original group $F = \langle x_0, x_1, \ldots \mid x_i^{-1} x_j x_i = x_{j+1} \text{ for } 0 \leq i < j \rangle$ and the subgroup $F[1] = \langle x_1, x_2, \ldots \mid x_i^{-1} x_j x_i = x_{j+1} \text{ for } 1 \leq i < j \rangle$. Clearly $F \cong F[1]$, and so $r_0(F) = r_0(F_1) = 2$. But any character $\chi \in \Hom(F,\mathbb{R})$ with $\chi(x_1) = 0$ restricts to the trivial character on $F[1]$, and all other character classes in $S(F)$ restrict to $[\pm \chi_1]$, where $\chi_1(x_0) = 0, \chi_1(x_1)=1$. Hence $\iota^\ast$ is only defined on a proper subset of $S(F)$, and the character classes in $\Sigma^n(F)$ on which $\iota^\ast$ is defined are mapped to a proper subset of $\Sigma^n(F[1])$.
\end{example}

\begin{example} \label{ex:2}
Similarly, $\vert G : H\vert < \infty$ alone does not guarantee the existence of a bijection between Sigma invariants of $G$ and $H$. For instance, the infinite dihedral group $D_\infty \cong \Z \rtimes C_2$ contains $\Z$ as a subgroup of index two. While $S(\Z) = \Sigma^1(\Z)$ is the $0$-sphere (and thus consists of two points), one has that $S(D_\infty)$ --- thus also $\Sigma^n(D_\infty)$ --- is empty as the abelianisation of $D_\infty$ is finite. 
\end{example}

Also note that the implications in Theorems~\ref{htpy-fi} and~\ref{hom-fi} cannot be reversed:

\begin{example} \label{ex:3}
Let $\mathbb{F}_n$ denote the free group on $n$ letters. It is known, see \cite[Proposition III.4.5]{Bieri}, that $\Sigma^1(\mathbb{F}_n) = \leer$ for all $n \geq 2$. Furthermore, $\mathbb{F}_n$ embeds with finite index in $\mathbb{F}_2$ \cite[Proposition I.3.9]{LyndonSchupp}. However, the torsion-free ranks of these groups are not equal as long as $n > 2$. 
\end{example}

We begin by establishing facts about both the Sigma invariants and $F_{\tau}.$ In \cref{3} we prove Theorems~\ref{htpy-fi} and~\ref{hom-fi}. And finally, in \cref{4} we compute the Sigma invariants for $F_\tau$.

\subsection*{Acknowledgments}

All three authors thank the Forschungsinstitut f\"ur Mathematik at the ETH Z\"urich for their hospitality during the Spring Semester 2023. The authors thank the anonymous referee for the very careful reading of an earlier version of the paper and the many helpful corrections and suggestions. We are also indebted to Conchita Mart\'inez-P\'erez for pointing us to the article by Kochloukova and Vidussi, and to Matthew Brin, Dawid Kielak, and Eduard Schesler for helpful comments.

\section{Background}

\subsection{Higher homotopical Sigma invariants}\label{higher}

We will begin with recalling some general definitions and facts that can be found, for example, in~\cite{Topics}. An Eilenberg--MacLane space, denoted $K(G,1)$ is an aspherical CW-complex $Y$ with $\pi_1(Y)=G.$ Its universal cover $X$ is contractible and has $G$ acting freely by deck transformations. Such a space is also called a model for $EG$ and is unique up to $G$-homotopy. A group $G$ is said to be of type $\tFn$ if there is a model for $EG$ with finite $n$-skeleton modulo the $G$-action, in which case we also say that this model has $G$-finite $n$-skeleton. Finally, $G$ is said to be of type $\tFm{\infty}$ if it is of type $\tFn$ for all $n \in \N$.

From now on let $G$ be of type $\tFn$ and let $X$ be a model for $EG$ with $G$-finite $n$-skeleton. 
The following construction is due to Renz \cite[Kapitel~II, Abschnitt~2]{RenzThesis}, see also \cite[Appendix~B, Section~B1.1]{Bieri}: 
 For a given character $\chi \in \Hom(G, \R)$ one defines an action of $G$ on $\R$ by $g\cdot r=r +\chi(g)$ for all $g \in G$ and $r \in\R$, which can be extended to a corresponding continuous $G$-equivariant map $h_\chi :  X \to \R$, also called a height function. Any such height function gives rise to an $\R$-filtration of $X$ given by the closed subspaces $h^{-1}_{\chi}([r, \infty))$. We shall consider ${X}_{h_\chi}^{[r,+\infty)}$, defined as the largest subcomplex of $X$ such that
\[x \in {X}_{h_\chi}^{[r,+\infty)} \implies h_\chi(x) \in [r,+\infty).\] 
When considering ${X}_{h_\chi}^{[0,+\infty)}$ we shall use the notation ${X}_{h_\chi}$.

\begin{definition}[{\cite[Kapitel~II, Definition~3.4]{RenzThesis} or \cite[Appendix~B, Definition in p.~194]{Bieri}}] \label{higher-top} 
Let $G$ be of type $\tFn$. Then the $n$-th Sigma invariant 
 $\Sigma^n(G) \subseteq S(G)$ is defined as follows:
 $[\chi] \in \Sigma^n(G)$ if there exists a model $X$ for $EG$ with $G$-finite $n$-skeleton and a corresponding height function $h_\chi$ on $X$ such that ${X}_{h_\chi}$ is $(n-1)$-connected.
\end{definition}

There are a priori different ways of extending the character $\chi$ to a $G$-equivariant height function $h_\chi$, though Renz shows that this distinction is immaterial and $\Sigma^n(G)$ is well-defined; cf. \cite[Kapitel~II, Bemerkungen~3.5]{RenzThesis}. This allows us to write $h$ instead of $h_\chi$ for an admissible height function extending a character $\chi$, if no confusion arises. 

While the connectivity condition in \cref{higher-top} might not hold for every model of $EG$ with $G$-finite $n$-skeleton, Renz~\cite{RenzThesis} also showed that the model may be arbitrary if one considers essential connectivity instead.

\begin{definition}[{\cite[Kapitel~II, Definition~3.6]{RenzThesis} or \cite[Appendix~B, Section~B1.2]{Bieri}}]
For $X^{[r,+\infty)}_h$ as defined above, we say that $X^{[r,+\infty)}_h$ is essentially $k$-connected for $k \in \Z_{\geq -1}$ if there is a real number $d \geq 0$ such that the map $\iota_j:\pi_j(X^{[r,+\infty)}_h)\to \pi_j(X^{[r-d,+\infty)}_h)$ induced by the inclusion $\iota : X^{[r,+\infty)}_h \into X^{[r-d,+\infty)}_h$ is the zero map for all $j \leq k$. 
\end{definition}

\begin{citing}[{\cite[Kapitel~IV, Satz~3.4]{RenzThesis} or \cite[Appendix~B, Theorem~B1.1]{Bieri}}] \label{essential} 
Let $G$ be a group of type $\tFn$ and let $X$ be an arbitrary model for $EG$ with $G$-finite $n$-skeleton. 
Let $\chi \colon G \to \R$ be a nontrivial character and $h \colon X \to \R$ a corresponding height function as above. Then 
\[ [\chi] \in \Sigma^n(G) \iff X_h \text{ is essentially } (n-1)\text{-connected}.\]
\end{citing}

\subsection{The homological invariant $\Sigma^n(G,A)$}\label{homSigma}

We will now give a brief overview of the definition and essential properties of the homological invariants $\Sigma^n(G,A)$, where $A$ is a $\Z G$-module, see \cite{Homology}. We follow the convention of Bieri, Renz, and Strebel \cite{BieriDimension,RenzThesis,Homology,Bieri} of working with left modules. In particular, a group or monoid acting on a module acts on the left.

\begin{definition}[{\cite[Chapter~VIII.4]{BrownKen}}]
Given a unital ring $R$, a (left) $R$-module $A$ is said to be of type $\tFPn$ over $R$ if it admits a resolution of the form 
\[\mbf{P} \quad \colon \quad \ldots \to P_i \to P_{i-1} \to \ldots \to P_1 \to P_0 \to A \to 0\] 
where the $P_i$ are free (left) $R$-modules which are finitely generated for $i \leq n$. In case $G$ is a group or monoid, we say that $G$ is of type $\tFPn$ if the trivial $\Z G$-module $A = \mathbb{Z}$ is {of} type $\tFPn$ over $R = \Z G$.
\end{definition}

One can analogously define `(right) type $\tFPn$', i.e., using right actions and right modules, and a group being of type $\tFPn$ does not depend on whether one works from the left or right; cf. \cite{BieriDimension}. However, the same is not true in the case of monoids (see, for instance, \cite{DesiConchitaSigmaBredon}), whence the importance of fixing a convention for the actions when working with both groups and monoids. 

\begin{definition}[{\cite[Section~1.3]{Homology}}]
Let $G$ be a group and $A$ a $\Z G$-module  of type $\tFPn$. The $n$-th homological Sigma invariant $\Sigma^n(G,A) \subseteq S(G)$ is defined as follows:
\[ [\chi] \in \Sigma^n(G,A) \iff A \text{ is of type } \tFPn \text{ over the subring } \mathbb{Z} G_{\chi} \subseteq \Z G,\] 
where $G_{\chi}$ is the submonoid $G_{\chi} = \{ g \in G \mid \chi(g) \geq 0\}$. 
\end{definition}

Now let $G$ be a group of type $\tFn$. Then (cf. \cite[Satz~B]{RenzThesis} and also \cite{Homology}) it holds:
\begin{equation} \label{homotopy}
\begin{aligned}
\Sigma^1(G) & = \Sigma^1(G; \Z) \\
\Sigma^n(G) & = \Sigma^2(G) \cap \Sigma^n(G; \Z ) \text{ for } n \geq 2
\end{aligned}
\end{equation}

Similarly to the homotopical case, it was shown that the definition of $\Sigma^n(G;A)$ does not depend on the partial finitely generated projective resolution of the $\Z G$-module $A$, see \cite[Theorem~3.2]{Homology}.

\subsection{Background on the Golden Mean Thompson group $F_{\tau}$} \label{sec:DefFtau}

Let  $\tau$ denote the small Golden Ratio, that is, the positive solution $\tau = \frac{\sqrt{5}-1}{2}$ to the equation $x^2+x=1$.

\begin{definition}[{\cite{Cleary}}]
The group $F_{\tau}$ is defined as the subgroup of piecewise linear, orientation-preserving homeomorphisms of the interval $[0,1]$ with slopes in the group $\langle \tau \rangle$ and breakpoints  in $\Z[\tau]$.
 \end{definition}

\begin{citing}[{\cite[Theorem~4.4]{Ftau}}]
$F_{\tau}$ has the {(infinite)} presentation
\begin{equation} \label{pres}
F_{\tau} \cong \langle x_i, y_i \mid a_j b_i = b_i a_{j+1}, y_i^2=x_i x_{i+1}; a,b \in \{x,y\}, 0 \leq i < j \rangle.
\end{equation}
\end{citing}

In the above, $i,j \in \Z_{\geq 0}$. We can write the generators of $F_\tau$ as functions on the interval $[0,1]$ in the following forms:

\begin{equation} \label{generators} 
\begin{aligned}
x_i(n) = & {\left\{
	\begin{array}{llll}
	n & \mbox{for } 0 \leq n \leq 1-\tau^i, \\
	\tau^{-2} n - \tau^{-1}(1-\tau^i)& \mbox{for } 1-\tau^i \leq n \leq 1-\tau^i+\tau^{i+4}, \\
	n + \tau^{i+3} & \mbox{for } 1-\tau^i+\tau^{i+4} \leq n \leq 1-\tau^{i+1}, \\
	\tau n + \tau^2  & \mbox{for } 1-\tau^{i+1} \leq n \leq 1,
	\end{array}
	\right.} \\
y_i(n) = & {\left\{
	\begin{array}{llll}
	n & \mbox{for } 0 \leq n \leq 1-\tau^i, \\
	\tau^{-1} n - \tau^{-1}(1-\tau^i)& \mbox{for } 1-\tau^i \leq n \leq 1-\tau^{i+1}, \\
	\tau n + \tau^2  & \mbox{for } 1-\tau^{i+1} \leq n \leq 1.
	\end{array}
	\right.}
\end{aligned}
\end{equation}

These elements can also be understood as equivalence classes of ordered tree-pairs, as described in \cite[Section~4]{Ftau}. 
As for the original Thompson group $F$, the elements of $F_\tau$ have a unique normal form \cite[Theorem~7.3]{Ftau}. We shall use the following normal form:
\begin{citing}[{\cite[Section~7]{Ftau}}] \label{seminormal}
Any element $f \in F_{\tau}$ can be uniquely expressed in the form 
\[ f= x_0^{i_0} y_0^{\epsilon_0} x_1^{i_1} y_1^{\epsilon_1} \cdots x_n^{i_n} y_n^{\epsilon_n} x_m^{-j_m} x_{m-1}^{-j_{m-1}} \cdots x_0^{-j_0}, \]
where $i_k, j_k \in \mathbb{Z}_{\geq 0}$, $\epsilon_k \in \{0,1\}$, $0 \leq k \leq n$, and moreover the following hold for all $k$:
\begin{enumerate}
\item If $i_k \neq 0 \neq j_k$, then at least one of $i_{k+1}$, $j_{k+1}$, $\epsilon_{k}$, $\epsilon_{k+1}$ is nonzero; 
\item  In case $f$ contains a subword of the form $x_k y_k x_{k+2} u x_{k+1}^{-1} x_k^{-1}$, then the middle subword $u$ contains a generator indexed either by $k+1$ or $k+2$.
\end{enumerate}
\end{citing}

\iffalse
\begin{citing}[{\cite[Theorem~6.4]{Ftau}}] \label{comsub}
The commutator subgroup of $F_{\tau}$ is simple. 
\end{citing}
\fi

Like $F$, the group $F_\tau$ also enjoys the strong homotopical and homological finiteness properties. 

\begin{citing}[{\cite{Cleary}}]\label{FtauFinfty}
The Golden Mean Thompson group $F_\tau$ is of type $\tFm{\infty}.$
\end{citing}

\section{Sigma invariants and finite index}\label{3}

In this section we prove Theorems~\ref{htpy-fi} and~\ref{hom-fi}. We begin by discussing, for $H \leq G$, maps between $H^1(H;\R) \cong \Hom(H,\R)$ and $H^1(G;\R) \cong \Hom(G,\R)$.

\begin{lemma} \label{lem:InjSurjHom}
Suppose $G$ is a finitely generated group, let $H \leq G$, and write $\pi : G \onto G_{\mathrm{ab}}$ for the canonical projection and $\iota : H \into G$ for the inclusion. Then the following hold.
\begin{enumerate}
\item \label{lem:InjSurjHom1} If $\vert G : H \vert < \infty$, then the map $\iota^\ast : \Hom(G,\R) \to \Hom(H,\R)$ induced by the inclusion is injective. 
\item \label{lem:InjSurjHom2} If the image $\pi(H)$ is infinite, then there exists a nontrivial morphism $e : \Hom(H,\R) \to \Hom(G,\R)$. That is, any character $\psi$ of $H \leq G$ gives rise to a character $e(\psi)$ of $G$ and the image $e(\Hom(H,\R)) \subseteq \Hom(G,\R)$ is a nonzero subspace.
\end{enumerate}
\end{lemma}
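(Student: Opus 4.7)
The plan is to address the two parts separately: combinatorics on cosets for part (\ref{lem:InjSurjHom1}), and elementary linear algebra over $\R$ for part (\ref{lem:InjSurjHom2}).

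For part (\ref{lem:InjSurjHom1}), I would show $\ker(\iota^\ast) = 0$ directly. Given $\chi \in \Hom(G,\R)$ with $\chi|_H = 0$ and any $g \in G$, the pigeonhole principle applied to the finitely many cosets $gH, g^2 H, \ldots, g^{[G:H]+1} H$ yields indices $1 \le i < j$ with $g^{j-i} \in H$. Then $(j-i)\chi(g) = \chi(g^{j-i}) = 0$, forcing $\chi(g) = 0$, and hence $\chi \equiv 0$. This step is essentially forced by the hypothesis and presents no real difficulty.

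For part (\ref{lem:InjSurjHom2}), the strategy is to build $e$ as a composition of a section and a retraction that both factor through a carefully chosen finite-dimensional space. Since $G$ is finitely generated, $G_{\mathrm{ab}}$ is a finitely generated abelian group, so its subgroup $\pi(H)$ is also finitely generated; combined with the hypothesis that $\pi(H)$ is infinite, this gives $r_0(\pi(H)) \geq 1$, so $Z := \Hom(\pi(H),\R)$ is a nonzero finite-dimensional $\R$-vector space. The factorisation $H \onto H_{\mathrm{ab}} \to \pi(H) \into G_{\mathrm{ab}}$ produces, after applying $\Hom(-,\R)$, a surjection $q : \Hom(G,\R) \onto Z$ and an injection $j : Z \into \Hom(H,\R)$; the surjectivity of $q$ comes from $\R$ being a divisible, hence injective, $\Z$-module, so that $\Hom(-,\R)$ sends the short exact sequence $0 \to \pi(H) \to G_{\mathrm{ab}} \to G_{\mathrm{ab}}/\pi(H) \to 0$ to a surjection. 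Now pick any $\R$-linear section $s$ of $q$ and any $\R$-linear retraction $r$ of $j$, both of which exist because subspaces of $\R$-vector spaces admit linear complements, and set $e := s \circ r$. Since $r \circ j = \id_Z$, the composition $e \circ j$ equals $s$, so $e$ is nonzero on $j(Z)$ and its image contains the nonzero subspace $s(Z) \subseteq \Hom(G,\R)$.

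The main subtlety to be aware of is that $H$ need not be finitely generated, so $\Hom(H,\R)$ can be infinite-dimensional. Any direct dimension-counting comparison between $\Hom(G,\R)$ and $\Hom(H,\R)$ would fail, and the plan avoids this by routing the construction through the finite-dimensional space $Z$. This is also precisely the point at which the hypothesis on $\pi(H)$ being infinite — rather than any hypothesis on $H$ itself — is essential.
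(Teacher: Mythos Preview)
Your proof is correct. Part~(\ref{lem:InjSurjHom1}) is essentially identical to the paper's argument: both use that some positive power of $g$ lies in $H$ to force $\chi(g)=0$; you phrase it directly via pigeonhole, the paper phrases it by contradiction (if $\chi(g)\neq 0$ then no power of $g$ can lie in $H$, contradicting finite index).

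For part~(\ref{lem:InjSurjHom2}) the underlying content is the same but the packaging differs. The paper works explicitly: it chooses a basis of $\pi(H)\otimes_\Z\Q$, extends it to a basis of $G_{\mathrm{ab}}\otimes_\Z\Q$, and writes down a concrete formula $e(\psi)(g)=\sum_i a_i\psi(h_i)$ using the coordinates $a_i$ of $\bar g$ in that basis. Your version is the abstract reformulation of exactly this: the basis extension is what produces your section $s$ of $q$, and reading off $\psi(h_1),\ldots,\psi(h_m)$ is your retraction $r$, so the paper's formula is precisely your $s\circ r$ made explicit. Your route has the advantage of isolating the single homological fact doing the work (injectivity of $\R$ as a $\Z$-module gives surjectivity of $q$), and it handles the possibly infinite-dimensional $\Hom(H,\R)$ cleanly by factoring through the finite-dimensional $Z$. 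The paper's route, in turn, gives a concrete formula one can compute with, which is closer in spirit to the explicit lift written out later in \cref{ex:chareq}.
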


\cref{lem:InjSurjHom}\eqref{lem:InjSurjHom2} was observed by Kochloukova--Vidussi; cf. \cite[Proof of Theorem~1.1]{DesiVidussi}. Kochloukova and Vidussi work with characters in $G$ that are already assumed to be extensions of characters of a subgroup $H \leq G$. However, in the form we state \cref{lem:InjSurjHom}, the character $e(\psi) \in \Hom(G,\R)$ {need not} be a valid extension of the original character $\psi \in \Hom(H,\R)$. That is, it might be the case that $\iota^\ast \circ e (\psi) \neq \psi$; see \cref{ex:nonextendedcharacter} below.

From now on, when working in the abelianisation of a group, we will write the group operation additively.

\begin{proof}
Part~\eqref{lem:InjSurjHom1}: Take a nonzero character $\chi \in \mathrm{Hom}(G,\mathbb{R})$ and suppose  that $\iota^*(\chi)=\chi_{|H}=0$. As $\chi(G) \neq 0$, there exists $g \in G$ such that $\chi(g)\neq 0$, but as $\chi(H) = 0$ one has $g \not\in H$. Furthermore, we can say $g^n \not\in H$ for all $n \in \mathbb{N}$, as
\begin{equation*}
\begin{aligned}
g^n \in H \implies \chi(g^n) & = 0 \\
\iff n\chi(g) & = 0 \\
\iff \chi(g) & = 0,
\end{aligned}
\end{equation*}
contradicting $\chi(g) \neq 0$. Thus $g^n H$ are all distinct cosets of $H$, which means that $H$ is not finite index, contradicting our assumption. Hence, $\chi(H) \neq 0$.

Part~\eqref{lem:InjSurjHom2}: Consider the (finite dimensional) $\Q$-vector space $V = G_{\mathrm{ab}} \otimes_\Z \Q$. Since the image $\pi(H) \subseteq G_{\mathrm{ab}}$ is infinite, the set $\pi(H)$ contains some torsion-free elemenet and thus $\pi(H) \otimes_\Z \Q$ contains a partial basis for $V$, say $\mathcal{B}' = \{\overline{h}_1, \ldots, \overline{h}_m\}$, where each $\overline{h}_i$ is the image in $G_{\mathrm{ab}}$ of some $h_i \in H$. Extend this to a basis $\mathcal{B} = \{ \overline{h}_1, \ldots, \overline{h}_m, \overline{g}_{m+1}, \ldots, \overline{g}_{r} \}$ of $V$, again with $\overline{g}_j$ being the image of some $g_j \in G$. Since the image of characters of a group factors through their abelianisation, we may define 
\[ e(\psi)(g) := \sum_{i=1}^m a_i \psi(h_i), \]
where the $a_x$ with $x \in \mathcal{B}$ are the coordinates of the image of $g$ in $G_{\mathrm{ab}} \otimes_\Z \Q$ in the basis $\mathcal{B}$. It is straightforward to check that $e$ is a homomorphism from $\Hom(H,\R)$ to $\Hom(G,\R)$. Again because $\pi(H) \subseteq G_{\mathrm{ab}}$ is infinite and $G$ is finitely generated, the induced map $H \to \pi(H) \otimes_{\Z} \R \cong \R^m$ gives a nontrivial character, call it $\psi \in \Hom(H,\R)$, by projecting onto the line spanned by a nonzero vector of $\pi(H) \otimes_{\Z} \R$. By construction, the character $e(\psi) \in \Hom(G,\R)$ is also nontrivial.
\end{proof}

\begin{example} \label{ex:nonextendedcharacter}
As mentioned above, the proof of \cref{lem:InjSurjHom}\eqref{lem:InjSurjHom2} might yield an `extension' of a character $\psi$ of $H$ such that $\iota^\ast \circ e (\psi) \neq \psi.$ For example, let $H = \Z \times \Z \leq G = D_\infty \times \Z$ and take $\psi$ to be a character of $H$ which is nonzero on the first coordinate. 
\end{example}

However, provided $H$ is of finite index in $G$ and their first Betti numbers agree, one can always construct a lift from $\mathrm{Hom}(H,\mathbb{R})$ to $\mathrm{Hom}(G,\mathbb{R})$ that circumvents these problems. We summarise these properties in the following.

\begin{proposition} \label{Findex 2}
Let $G$ be a finitely generated group and  $H \leq G$ a subgroup of finite index. Then the following are equivalent: 
\begin{enumerate}
\item $r_0(G) = r_0(H)$.
\item $\iota^\ast : \Hom(G,\R) \to \Hom(H,\R)$ is an isomorphism of $\R$-vector spaces.
\item The assignment $\iota^\ast([\chi]) := [\chi_{|H}]$ is defined on all character classes $[\chi] \in S(G)$, and the corresponding map $\iota^\ast : S(G) \to S(H)$ is a homeomorphism.
\item Every character $\chi \in \Hom(H,\R)$ admits a lift $\chi' \in \Hom(G,\R)$ such that $\chi'|_H = \chi$ and $\chi \neq 0 \iff \chi' \neq 0$.
\end{enumerate}
\end{proposition}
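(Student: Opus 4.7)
The plan is to exploit the fact that, since $H \leq G$ has finite index, \cref{lem:InjSurjHom}\eqref{lem:InjSurjHom1} tells us the restriction $\iota^\ast \colon \Hom(G,\R) \to \Hom(H,\R)$ is an injective $\R$-linear map between finite-dimensional $\R$-vector spaces of respective dimensions $r_0(G_{ab})$ and $r_0(H_{ab})$. All four conditions can then be rephrased as equivalent ways of stating that this injective linear map is in fact an isomorphism. I would therefore establish $(1)\Leftrightarrow(2)$, $(2)\Leftrightarrow(4)$, and $(2)\Leftrightarrow(3)$ separately.

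For $(1)\Leftrightarrow(2)$, an injective $\R$-linear map between finite-dimensional spaces is an isomorphism if and only if its source and target have equal dimension, which is precisely the numerical condition $r_0(G)=r_0(H)$. For $(2)\Leftrightarrow(4)$, given $\chi \in \Hom(H,\R)$, a lift $\chi' \in \Hom(G,\R)$ with $\chi'|_H=\chi$ is nothing but a preimage of $\chi$ under $\iota^\ast$, so the existence of lifts for all $\chi$ is surjectivity of $\iota^\ast$; the extra requirement $\chi \neq 0 \iff \chi' \neq 0$ is automatic from the injectivity we already have, and in the reverse direction $(2) \Rightarrow (4)$ one simply takes $\chi' := (\iota^\ast)^{-1}(\chi)$.

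For $(2)\Rightarrow(3)$, an $\R$-linear isomorphism between the full $\Hom$-spaces commutes with multiplication by $\R_{>0}$ and preserves nonzero vectors, so it descends to the quotients by positive rays and yields a homeomorphism $S(G) \to S(H)$ (with inverse induced by $(\iota^\ast)^{-1}$). Conversely, for $(3)\Rightarrow(2)$, the requirement that $\iota^\ast([\chi])=[\chi|_H]$ is defined on all of $S(G)$ is a restatement of injectivity of $\iota^\ast$ at the linear level, which we already have; and surjectivity of the map of spheres lifts to surjectivity of $\iota^\ast$ on $\Hom$-spaces, because for each nonzero $\psi \in \Hom(H,\R)$ one can find $\chi \in \Hom(G,\R)$ with $\iota^\ast(\chi)=a\psi$ for some $a>0$, and then $\chi/a$ is a genuine preimage of $\psi$.

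No genuine obstacle is anticipated: once the finite-index injectivity from \cref{lem:InjSurjHom}\eqref{lem:InjSurjHom1} is in place, every implication reduces to finite-dimensional linear algebra over $\R$. The only mild subtlety worth verifying is that the passage from a linear isomorphism to a homeomorphism of character spheres is legitimate, but this is standard since the character sphere is obtained from a finite-dimensional punctured real vector space by quotienting out the free $\R_{>0}$-action.
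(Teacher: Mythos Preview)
Your proposal is correct and follows essentially the same approach as the paper: both rely on \cref{lem:InjSurjHom}\eqref{lem:InjSurjHom1} to obtain injectivity of $\iota^\ast$, then reduce everything to elementary finite-dimensional linear algebra over $\R$. The paper dispatches $(1)\Leftrightarrow(2)\Leftrightarrow(3)$ in a single sentence and handles $(2)\Leftrightarrow(4)$ by observing that the assignment $\chi\mapsto\chi'$ is a right inverse to $\iota^\ast$; you spell out precisely the same reasoning in more detail.
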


\begin{proof}
The equivalences $(1) \iff (2) \iff (3)$ are immediate from \cref{lem:InjSurjHom}\eqref{lem:InjSurjHom1} as $\dim_\R(\Hom(\Gamma,\R)) = r_0(\Gamma)$ for any group $\Gamma$. Item~$(4)$ is equivalent to~$(2)$ as the function $e : \Hom(H,\R) \to \Hom(G,\R)$ given by $e(\chi) = \chi'$ is a right inverse to $\iota^\ast$.
\end{proof}

\begin{example} \label{ex:chareq}
It is not hard to explicitly construct the `extension map' $e : \Hom(H,\R)$ $\to \Hom(G,\R)$ of \cref{Findex 2}. Let $\{x_i,\ldots,x_n\}$ be a generating set for $G$ and write $r_0(G) = r_0(H) =k \leq n$. Without loss of generality one can assume that $\{\quer{x_1},\ldots,\quer{x_k}\}$ generates $(G_{ab})_0$. 
Since $|G:H| < \infty,$ for each $i=1,\ldots,n$, there exists an $\alpha_i \in \N$ such that $x_i^{\alpha_i} \in H$. Hence, using functoriality of  abelianisations,  and the fact that $\bar x_i$ has infinite order in $G_{ab}$, we have that $0 \neq\alpha_i \quer{x_i} \in  (H_{ab})_0$ for all $i =1,\ldots,k$.  Let $\alpha  = \lcm\{\alpha_1,\ldots,\alpha_k\}.$
Given a character $\chi: H \to \R$, we define its lift $e(\chi) = \chi': G \to \R$  by
\[\chi'(x_i) =\frac{1}{\alpha}\chi(x_i^\alpha), \text{ for all } i=1,\ldots,n.\]
\end{example}

To finish off \cref{hom-fi} we make use of the following:

\begin{citing}[{\cite[Proposition 9.3]{MMvW}}] \label{Prop9.3MMvW}
Suppose that $H \leq G$ is a subgroup of finite index and $A$ a $\Z G$-module of type $\tFPn,$ and suppose that $\chi: G \to \R$ restricts to a nonzero homomorphism of $H$. Then 
\[[\chi_{|H}] \in \Sigma^n(H, A) \iff [\chi] \in \Sigma^n(G,A).\]
\end{citing}

\begin{proof}[Proof of \cref{hom-fi}]
Immediate from \cref{Findex 2} and \cref{Prop9.3MMvW}.
\end{proof}

\begin{remark} \label{rmk:Dawid}
{In case $A = \Z$ or a field, \cref{hom-fi} can also be proved as follows: by change of rings \cite{BieriDimension} and noting that the Novikov ring $\widehat{A[G]}^{\chi}$ is isomorphic to the tensor product $\widehat{A[H]}^{\chi|_H} \otimes_{A[H]} A[G]$, an application of \cref{Findex 2} combined with the equivalence (1)~$\iff$~(5) from a result of Fisher~\cite[Theorem~5.3]{SamFImproved} proves the claim.}
\end{remark}

For completeness we now give an elementary proof of the homotopical part, which needs the following.

\begin{proposition} \label{sigmaeq} Let $G$ be a group of type $\tFn$ and $H$ a subgroup of finite index such that $r_0(G)=r_0(H).$ With the notation of \cref{Findex 2} we have
\[[\chi] \in \Sigma^n(G) \implies [\chi_{|H}] \in \Sigma^n(H),\]
and 
\[[\chi] \in \Sigma^n(H) \implies [\chi'] \in \Sigma^n(G).\]
\end{proposition}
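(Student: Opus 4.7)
The plan is to exploit the fact that, when $[G:H]<\infty$, any space $X$ that serves as a model for $EG$ with $G$-finite $n$-skeleton simultaneously serves as a model for $EH$ with $H$-finite $n$-skeleton. I will work throughout with the essential connectivity characterization of \cref{essential}, since this allows me to reuse the same height function and the same sublevel set for both the $G$- and $H$-viewpoints rather than having to make separate choices on each side.

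First I fix a model $X$ for $EG$ with $G$-finite $n$-skeleton. Since $H$ has finite index in $G$, the restricted action of $H$ on $X$ is still free, $X$ remains contractible, and the $n$-skeleton is $H$-cocompact because each $G$-orbit of cells breaks up into $[G:H]$-many $H$-orbits. The central observation is that any $G$-equivariant height function $h$ corresponding to a character $\chi\in\Hom(G,\R)$ is automatically an $H$-equivariant height function for the restriction $\chi_{|H}$, since $h(hx)=h(x)+\chi(h)=h(x)+\chi_{|H}(h)$ for all $h\in H$; moreover the sublevel set $X_h$ is the \emph{same} subcomplex whether viewed as a $G$- or as an $H$-equivariant object.

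For the forward implication I will choose, given $[\chi]\in\Sigma^n(G)$, a $G$-equivariant height function $h$ for $\chi$, invoke \cref{essential} from the $G$-side to see that $X_h$ is essentially $(n-1)$-connected, and then apply \cref{essential} from the $H$-side to conclude $[\chi_{|H}]\in\Sigma^n(H)$. For the reverse implication I will take, given $[\chi]\in\Sigma^n(H)$, the lift $\chi'\in\Hom(G,\R)$ provided by \cref{Findex 2}(4) (so $\chi'_{|H}=\chi$ and $\chi'\neq 0$), pick any $G$-equivariant height function $h'$ for $\chi'$ on $X$, observe that $h'$ is then also an $H$-equivariant height function for $\chi$, use the hypothesis $[\chi]\in\Sigma^n(H)$ together with \cref{essential} to deduce that $X_{h'}$ is essentially $(n-1)$-connected, and finally apply \cref{essential} from the $G$-side to obtain $[\chi']\in\Sigma^n(G)$.

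I expect no serious obstacles beyond carefully bookkeeping the two roles of $X$ as both an $EG$- and an $EH$-model; the crucial conceptual point is that essential $(n-1)$-connectivity of $X_h$ is intrinsic to the underlying subcomplex and insensitive to whether $X$ is regarded as a $G$-CW-complex or an $H$-CW-complex. The hypothesis $r_0(G)=r_0(H)$ enters the argument only through \cref{Findex 2}(4), which is precisely what furnishes the nonzero lift $\chi'$ needed in the reverse direction; without it one could not guarantee that the given $H$-character arises as the restriction of some nonzero character of $G$.
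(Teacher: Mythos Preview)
Your proof is correct. The forward direction is essentially identical to the paper's. For the reverse direction, however, you take a cleaner route than the paper does. The paper first builds an \emph{$H$-equivariant} height function $h_\chi$ by hand: it fixes coset representatives $t_0=e,t_1,\ldots,t_{m-1}$, declares $h_\chi(t_i)=0$ and $h_\chi(\gamma)=\chi(\gamma)$ for $\gamma\in H$, extends piecewise linearly, and only afterwards compares this to a $G$-equivariant height function $h_{\chi'}$ for the lift $\chi'$ by checking that the two sublevel complexes coincide up to a uniform shift, namely $X^{[0,\infty)}_{h_\chi}=X^{[d,\infty)}_{h_{\chi'}}$ with $d=\min_i \chi'(t_i)$. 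You bypass this explicit comparison entirely by observing that any $G$-equivariant height function $h'$ for $\chi'$ is already an admissible $H$-equivariant height function for $\chi'|_H=\chi$, so two applications of \cref{essential} (first from the $H$-side, then from the $G$-side, on the \emph{same} complex $X_{h'}$) finish the argument. What your approach buys is brevity and the avoidance of coset bookkeeping; what the paper's more explicit construction buys is a concrete identification of the shift $d$, which is conceptually illustrative even if not logically necessary once one invokes \cref{essential} in full strength.
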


\begin{proof} To prove the first claim, consider a model $X$ for $EG$ with $G$-finite $n$-skeleton. Now suppose $[\chi] \in \Sigma^n(G),$ hence $X^{[0,+\infty)}_{h_\chi}$ is $(n-1)$-connected for the height function $h_\chi$ corresponding to $\chi.$ Since $H$ is finite index in $G$, the space $X$ is also a model for $EH$ with $H$-finite $n$-skeleton, and $h_\chi =h_{\chi_{|H}}$. Hence $[\chi_{|H}] \in \Sigma^n(H)$.

\noindent Let us now assume  $[\chi] \in \Sigma^n(H)$. Again using $|G:H|<\infty$, choose a model for $X$ for $EH$ as above: $X$ is a simplicial complex with $G$-finite $n$-skeleton and one $G$-orbit of zero-cells labeled by $G$. 

We now fix a set $T=\{t_0,\ldots,t_{m-1}\}$ of coset representatives of $H$ in $G$, put $t_0 =e$, and construct an $H$-equivariant height function $h_\chi: X \to \R$ on the vertices of $X$ as follows: For $\gamma \in H$ we put $h_{\chi}(\gamma)=\chi(\gamma)$ and for $h_{\chi}(t_i)=0.$  Hence, since every $g \in G$ has a unique expression as $g =t_i \gamma$, we get
$$h_\chi(g) = h_{\chi}(t_i)+h_{\chi}(\gamma)=\chi(\gamma).$$
Finally, we extend this function piecewise linearly to the entire $n$-skeleton on $X.$
Hence $X^{[0,+\infty)}_{h_\chi}$ is essentially $(n-1)$-connected, see \cref{essential}.

It remains to show that this connectivity property remains true using a height function $h_{\chi'}$ corresponding to a lift $\chi'$ of $\chi$. 
Note that $\chi'(t_i)$ is not necessarily equal to $0$. Define $d=\textbf{min}\{\chi(t_i)\}$. 

We claim that, for very $g \in G$, $h_\chi(g) \geq 0$ if and only if $\chi'(g) \geq d$. To see this, write $g = t_i\gamma$ as above. Since  $h_\chi(g) = \chi(\gamma)$ and
$\chi'(g) = \chi'(t_i \gamma) = \chi'(t_i) +\chi(\gamma)$, we get 
\[h_\chi(g)  \geq 0 \iff \chi(\gamma) \geq 0 \iff \ \chi'(t_i) + \chi(\gamma) \geq d+0 \iff \chi'(g) \geq d,\] 
as required.

This now implies implies that the  $0$-skeleton of $X^{[0,+\infty)}_{h_\chi}$ is precisely the same as the $0$-skeleton of $X^{[d,+\infty)}_{h_{\chi'}}$. As the space $X^{[r,+\infty)}_{h_\chi}$ is defined as the maximal subcomplex of $X$ contained in $h_{\chi}^{-1}([r,+\infty))$, where an $m$-cell is included if all of its boundary cells are included \cite[Appendix~B, p.~194]{Bieri}, we have shown that $X^{[0,+\infty)}_{h_\chi} = X^{[d,+\infty)}_{h_{\chi'}}$. Hence $[\chi'] \in \Sigma^n(G)$ again by \cref{essential}.
\end{proof}

\begin{proof}[Proof of \cref{htpy-fi}]
This follows from  Propositions~\ref{Findex 2} and~\ref{sigmaeq}.
\end{proof}

\section{The Sigma invariants for $F_{\tau}$}\label{4}

We begin by collecting some properties of $F_\tau$ and its characters as well as exhibiting a finite index subgroup which satisfies the assumptions of Theorems~\ref{htpy-fi} and~\ref{hom-fi}.

It was shown in \cite[Chapter 5]{Ftau} that
$$(F_\tau)_{ab} \cong \Z^2 \oplus \Z/2\Z.$$
Hence 
$$S(F_\tau) =S^1.$$
Similarly to the original Thompson's group case, we have the two linearly independent characters $\lambda$ and $\rho$ given by some logarithm of the slopes at $0$ and $1$ respectively, such that
$[\lambda]$ and $[\rho]$ span $S(F_\tau)$. In particular, these, for every $f \in F_\tau$,  are given by
\[\lambda(f) =\log_{\tau}(f'(0)) \quad \text{ and } \quad \rho(f) = \log_{\tau}(f'(1)).\]

By taking appropriate subdivisions of $[0,1]$, one can construct elements $f \in F_\tau$ with support in $[0,b] \cap \Z[\tau]$ for some $b < 1$ and such that $f'(0)=\tau$. Analogously, one can find $g \in F_\tau$ with support in $[a,1]$ for some $a > 0$ and with $g'(1)=\tau$. Hence $\lambda(f) = 1 = \rho(g)$, $\lambda(g) = 0 = \rho(f)$ and thus $\lambda$ and $\rho$ are linearly independent.

As an example, we can use the following elements:

\begin{example}\label{li-example}
\[
f(x) = \left\{
	\begin{array}{lll}
	\tau x & \mbox{for } 0 \leq x \leq \tau^2 \\
	\tau^{-1}x - \tau^2 & \mbox{for } \tau^2 \leq x \leq \tau \\
	x  & \mbox{for } \tau \leq x \leq 1
	\end{array}
	\right.
\]
\[
g(x) = \left\{
	\begin{array}{lll}
	 x & \mbox{for } 0 \leq x \leq \tau^2 \\
	\tau^{-1}x - \tau^3 & \mbox{for } \tau^2 \leq x \leq \tau \\
	\tau x + \tau^2  & \mbox{for } \tau \leq x \leq 1.
	\end{array}
	\right.
\]
\end{example}

\begin{proposition}\label{K}
Let $K$ denote the subgroup of $F_\tau$ generated by \hfill\break $\{x_0,x_1,y_1,x_2,y_2,...\}.$ Then $|F_\tau: K| =2$ and 
$K_{ab} \cong \Z^2 \oplus \Z/2\Z.$
\end{proposition}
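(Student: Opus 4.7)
For the index claim $|F_\tau : K| = 2$, my plan is to exhibit a surjective homomorphism $\phi : F_\tau \to \Z/2\Z$ sending $y_0 \mapsto 1$ and every other generator $x_i$ ($i \geq 0$), $y_j$ ($j \geq 1$) to $0$. One verifies that $\phi$ respects the defining relations in \eqref{pres} by a quick case analysis: the relations $y_i^2 = x_i x_{i+1}$ are satisfied since $2\phi(y_i) \equiv 0 \pmod 2$ matches $\phi(x_i) + \phi(x_{i+1}) = 0$; and in the relations $a_j b_i = b_i a_{j+1}$ with $0 \leq i < j$, the generator $y_0$ can occur only as $b_i$ (with $b = y$, $i = 0$), since $a_j, a_{j+1}$ have indices $\geq 1$, and both sides of the resulting $\phi$-image coincide. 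Since $K \subseteq \ker \phi$ and $[F_\tau : \ker \phi] = 2$, it remains to show $\ker \phi \subseteq K$. For this I invoke the normal form of \cref{seminormal}: for $f$ written in normal form, $\phi(f) \equiv \epsilon_0 \pmod 2$, so $f \in \ker \phi$ forces $\epsilon_0 = 0$, and then every factor of the normal form lies in the prescribed generating set of $K$ (recall $x_0 \in K$). This gives $\ker \phi = K$, and hence $|F_\tau : K| = 2$.

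For the abelianization $K_{ab} \cong \Z^2 \oplus \Z/2\Z$, I first reduce the generating set of $K$. The $F_\tau$-relations $x_j x_0 = x_0 x_{j+1}$ and $y_j x_0 = x_0 y_{j+1}$ (for $j \geq 1$) yield inductively that $x_i, y_i \in \langle x_0, x_1, y_1 \rangle$ for every $i \geq 2$, whence $K = \langle x_0, x_1, y_1 \rangle$. Abelianizing these conjugation-type relations gives $\bar{x}_i = \bar{x}_1$ and $\bar{y}_i = \bar{y}_1$ in $K_{ab}$ for all $i \geq 1$, and the relation $y_1^2 = x_1 x_2$ then collapses to $2(\bar{y}_1 - \bar{x}_1) = 0$. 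Hence there is a natural surjection
\[
A := \langle \bar{x}_0, \bar{x}_1, \bar{y}_1 \mid 2(\bar{y}_1 - \bar{x}_1) = 0 \rangle \cong \Z^2 \oplus \Z/2\Z \twoheadrightarrow K_{ab}.
\]

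To close the argument I would show this surjection is injective. By \cref{lem:InjSurjHom}\eqref{lem:InjSurjHom1}, the restrictions $\lambda|_K$ and $\rho|_K$ inherit $\R$-linear independence from $\lambda, \rho$ on $F_\tau$, so $K_{ab}$ has torsion-free rank at least two; since $A$ itself has torsion-free rank two, the kernel of $A \twoheadrightarrow K_{ab}$ must be pure torsion and hence contained in $A_{\mathrm{tors}} \cong \Z/2\Z$. But the inclusion-induced map $K_{ab} \to (F_\tau)_{ab}$ sends $\bar{y}_1 - \bar{x}_1$ to the generator of the $\Z/2\Z$-torsion of $(F_\tau)_{ab} \cong \Z^2 \oplus \Z/2\Z$, which is nonzero, so $\bar{y}_1 - \bar{x}_1 \neq 0$ in $K_{ab}$ and the kernel of $A \twoheadrightarrow K_{ab}$ is therefore trivial. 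The main potential obstacle would be a direct Reidemeister--Schreier rewriting --- one could present $K$ on the Schreier generators $y_0 x_i y_0^{-1}, y_0 y_j y_0^{-1}, y_0^2$ and track each conjugate-by-$y_0$ of every relator --- but the three-generator reduction combined with the character-theoretic injectivity criterion above bypasses this bookkeeping.
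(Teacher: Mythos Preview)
Your proof is correct. For the index claim you take a cleaner route than the paper: you exhibit an explicit surjection $\phi : F_\tau \to \Z/2\Z$ and identify $K$ with its kernel via the normal form, whereas the paper verifies the coset decomposition $F_\tau = K \sqcup y_0 K$ by hand, pushing $y_0$ past powers of $x_0$ using a word manipulation in the defining relations (equation~\eqref{relator}) and then arguing separately that $K$ is proper. Your homomorphism approach yields normality of $K$ for free and bypasses that manipulation entirely. For the abelianisation, the paper simply records the same consequences $\bar{x}_i = \bar{x}_{i+1}$ and $2\bar{y}_i = 2\bar{x}_1$ for $i \geq 1$ and asserts the outcome $\Z^2 \oplus \Z/2\Z$; your argument is more careful in that you explicitly set up the surjection from $\Z^2 \oplus \Z/2\Z$ and then prove injectivity by bounding the torsion-free rank via $\lambda|_K$, $\rho|_K$ and checking nonvanishing of $\bar{y}_1 - \bar{x}_1$ under the induced map $K_{ab} \to (F_\tau)_{ab}$. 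The underlying computation is the same, but your injectivity step fills in details the paper leaves implicit.
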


\begin{proof} We claim $F_\tau = K \sqcup y_0K.$ To do so, consider $g \in F_\tau$ in normal form, see \cref{seminormal}:

\[g=x_0^{i_0}y_0^{\epsilon_0}x_1^{i_1}y_1^{\epsilon_1}\cdots x_n^{i_n}y_n^{\epsilon_n}x_m^{-j_m}x_{m-1}^{j_{m-1}}\cdots x_0^{j_0}\]
where $i_0,\ldots,i_n, j_0,\ldots,j_m \in \Z_{\geq 0}$ and $\epsilon_0,\dots,\epsilon_m \in \{0,1\}$. Hence
\[gK = x_0^{i_0}y_0^{\epsilon_0}K.\] 
When $\epsilon_0 =0$ we have $g \in K,$ and when $\epsilon_0 =1$ a repeated application of the following computation gives $g \in y_0K$:
\begin{equation} \label{relator}
\begin{aligned}
x_0^{i_0}y_0 & = x_0^{i_0-1}x_0y_0 = x_0^{i_0-1}x_0x_1x_1^{-1}y_0 = x_0^{i_0-1}y_0^2x_1^{-1}y_0 = \\
& = x_0^{i_0-1}y_0^2y_0x_2^{-1} = x_0^{i_0-1}y_0y_0^2x_2^{-1} = x_0^{i_0-1}y_0x_0x_1x_2^{-1}. 
\end{aligned}
\end{equation}

Consider any word in the generators $x_i$ and $y_j$ $(i \geq 0, j \geq 1)$ in $F_\tau.$ The relations of $F_\tau$, see \cref{pres}, imply that in any other expression on this element, the occurrence of $y_0^k$ will have $k$ an even integer. Hence, in the normal form of \cref{seminormal} such an element will have no occurrence of $y_0$. This implies that $K$ is a proper subgroup of $F_\tau,$ and moreover $|F_\tau : K| =2$. 

To determine the abelianisation we do a similar calculation to that in \cite[Section~5]{Ftau}: We denote the images of an element $f \in F_\tau$ in the abelianisation by $\bar f$ and write the group operation additively. From the relations it follows immediately that $\bar{x}_i =\bar{x}_{i+1}$ and that $2\bar{y}_i = 2\bar{x_1}$ for all $i \geq 1.$ Substituting $\bar z = \bar{y}_1-\bar{x}_1,$ we have the two infinite order generators $\bar{x}_0$ and $\bar{x}_1$ as well an order $2$ generator $\bar z$ as required.
\end{proof}

\noindent
Let $H$ be a group and $\sigma : H \to H$ a monomorphism. An ascending HNN extension (with base $H$) is a group given by the presentation 
\[H\ast_{t,\sigma} = \langle H,t \mid tht^{-1}=\sigma(h); h \in H \rangle.\]

We now consider the subgroup $F_\tau[1] \leq F_\tau$ generated by $\{x_1,y_1,x_2,y_2,...\}$. In analogy to Thompson's $F$, there is a well-known monomorphism 
$\sigma: F_\tau \to F_\tau$ given by $\sigma(x_n)=x_{n+1}$ and $\sigma(y_n)=y_{n+1}$, whose image is clearly $F_\tau[1] \subsetneq F_\tau$. Restricting to $F_\tau[1]$ gives a monomorphism $\sigma : F_\tau[1] \to F_\tau[1]$ whose image is the proper subgroup $F_\tau[2] \subsetneq F_\tau[1]$ generated by $\{x_2,y_2,x_3,y_3,\ldots\}$, and so on. Hence any $F_\tau[m]$ is isomorphic to $F_\tau$ and thus of type $\tFm{\infty}$. 
Much like $F$ is an HNN extension over a copy of itself (cf. \cite[Proposition~1.7]{BrownGeoghegan}), the group $K$ --- which contains $F_\tau[1]$ by definition --- differs from its subgroup $F_\tau[1] \cong F_\tau$ by the stable letter $x_0$. 

\begin{lemma} \label{lem:HNN}
The subgroup $K \leq F_\tau$ is isomorphic to the HNN extension 
\[K \cong F_\tau[1]\ast_{t, \sigma} = \langle F_\tau[1], t \mid t g t^{-1} = \sigma(g); g \in F_\tau[1] \rangle\]
by mapping $t$ to $x_0^{-1}$ and $F_\tau[1]$ to itself.
\end{lemma}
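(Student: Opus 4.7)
The plan is to construct an isomorphism $\phi : F_\tau[1] \ast_{t, \sigma} \to K$ via $t \mapsto x_0^{-1}$ and the identity on $F_\tau[1]$, and then to check well-definedness, surjectivity, and injectivity in turn. Well-definedness amounts to verifying $x_0^{-1} g x_0 = \sigma(g)$ in $K$ for all $g \in F_\tau[1]$; since $\sigma$ is a homomorphism, it suffices to check this on the generators $x_j, y_j$ of $F_\tau[1]$ with $j \geq 1$, where it reads $x_0^{-1} x_j x_0 = x_{j+1}$ and $x_0^{-1} y_j x_0 = y_{j+1}$. Both are instances of the relations $a_j x_0 = x_0 a_{j+1}$ from \eqref{pres}. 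Surjectivity of $\phi$ is then immediate from \cref{K}, since the image contains $x_0 = \phi(t^{-1})$ and every generator of $F_\tau[1]$.

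For injectivity I would first bring an arbitrary element $\omega \in F_\tau[1] \ast_{t, \sigma}$ into the form $t^{-m} h t^n$ with $m, n \geq 0$ and $h \in F_\tau[1]$. The elementary rewriting rules $t g = \sigma(g) t$ and $g t^{-1} = t^{-1} \sigma(g)$, which are equivalent to the HNN defining relation and valid for every $g \in F_\tau[1]$, allow one to migrate each occurrence of $t$ rightward and each $t^{-1}$ leftward, absorbing the intervening group elements via $\sigma$; any resulting $t t^{-1}$ or $t^{-1} t$ then cancels. Applying $\phi$ gives $\phi(\omega) = x_0^m h x_0^{-n}$, so if this is trivial in $K$ then $h = x_0^{n-m}$. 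It therefore suffices to show $F_\tau[1] \cap \langle x_0 \rangle = \{1\}$, whence $h = 1$ and $m = n$, and so $\omega = t^{-m} t^m = 1$ in the HNN extension.

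The intersection $F_\tau[1] \cap \langle x_0 \rangle$ is settled by inspecting the piecewise-linear formulas in \eqref{generators}: every generator $x_i, y_i$ with $i \geq 1$ fixes $[0, 1-\tau] = [0, \tau^2]$ pointwise, so every element of $F_\tau[1]$ does too; on the other hand $x_0$ sends any small $s > 0$ to $\tau^{-2} s \neq s$, so no nontrivial power of $x_0$ fixes a neighbourhood of $0$. The only mildly technical point in the plan is the normal-form reduction, which is essentially Britton's lemma specialised to the ascending case; I would carry it out by explicitly applying the two rewriting rules above rather than invoking Britton's lemma wholesale.
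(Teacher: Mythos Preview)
Your proof is correct and shares the paper's overall architecture: define $\phi$, check it is a well-defined surjection, and for injectivity rewrite an arbitrary element of the HNN extension into the form $t^{-m} h t^n$ before analysing what $\phi(t^{-m} h t^n)=1$ forces. The only divergence is in that final step. The paper shows that $x_0^{m}\,\phi(h)\,x_0^{-n}$ is (after a possible index shift) already in the unique normal form of \cref{seminormal}, and then reads off $h=1$ and $m=n$ from uniqueness; this requires a small case split depending on whether the normal-form conditions are met. You instead argue dynamically from the formulas in \eqref{generators}: every generator of $F_\tau[1]$ fixes $[0,\tau^2]$ pointwise while no nontrivial power of $x_0$ does, so $F_\tau[1]\cap\langle x_0\rangle=\{1\}$ and the conclusion follows. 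Your route is a touch more direct and avoids the case analysis, at the price of invoking the concrete piecewise-linear realisation rather than staying entirely within the combinatorial presentation.
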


\begin{proof}
For this proof we implicitly use standard facts about presentations and HNN extensions; cf. \cite[Chapter~IV, Section~2]{LyndonSchupp}. 

Let $\langle X \mid R \rangle$ denote the obvious presentation of $F_\tau[1]$, that is, the same as that of $F_\tau$ from \cref{pres} but with decorated generating set $X = \{\til{x_i}, \til{y_i} \mid i \geq 1\}$ and indices starting from $1$. The HNN extension $F_\tau[1]\ast_{t, \sigma}$ is thus given by the (abstract) group presentation 
\[ F_\tau[1]\ast_{t, \sigma} \cong L := \langle X, t \mid R, \, t \til{x_i} t^{-1} = \til{x}_{i+1}, \, t \til{y_i} t^{-1} = \til{y}_{i+1} \text{ for all } i \geq 1 \rangle.\]

The obvious map 
\[ \phi : L \to K \, \text{ induced by } \, t \mapsto x_0^{-1}, \, \til{x_i} \mapsto x_i, \, \til{y_i} \mapsto y_i \]
is a well-defined group homomorphism since all defining relations in $L$ hold in $K$. It is surjective by construction, and we want to check that it is also injective. Note that, since $L$ is an HNN extension, the group $F_\tau[1]$ effectively embeds in $L$ as its obvious subgroup $\langle X \rangle$. The restriction of $\phi$ to $\langle X \rangle$ is thus an isomorphism onto its image $F_\tau[1] \leq K$. In particular, if $g \in \langle X \rangle$, the isomorphisms $\langle X \rangle \cong F_\tau[1] \cong F_\tau$ and \cref{seminormal} yield a (unique) normal form for $g$ matching the (unique) normal form of $\phi(g) \in K \subseteq F_\tau$ (by dropping the tildes), and such a normal form of $\phi(g)$ in $K$ does not involve the generator $x_0$. 

Now let $w \in \ker(\phi) \trianglelefteq L$. As $L$ is an HNN extension, we may write $w$ in normal form 
\[w = g_0 t^{\veps_1} g_1 t^{\veps_2} \cdots g_{n-1} t^{\veps_{n-1}} g_n \]
with each $\veps_i \in \{\pm 1\}$ and $g_i \in \langle X \rangle$. If $\veps_i = -1$, repeated applications of the defining relations in $L$ yield 
\[g_i t^{-1} = t^{-1} g_i' \, \text{ for some } \, g_i' \in \langle \{ \til{x_j}, \til{y_j} \mid j \geq 2 \} \rangle \leq \langle X \rangle.\]
Similarly, if $\veps_i = 1$, then 
\[t g_i =  g_i' t \, \text{ for some } \, g_i' \in \langle \{ \til{x_j}, \til{y_j} \mid j \geq 2 \} \rangle \leq \langle X \rangle.\]
Thus, writing $a = \#\{ i \mid \veps_i < 0\} \geq 0$ and $b = \#\{ i \mid \veps_i > 0\} \geq 0$, the word $w$ can be rewritten as 
\[ w = t^{-a} g' t^b \, \text{ where } \, g' \in \langle X \rangle.\] 
As $g' \in \langle X \rangle$, we may replace it by its (unique) normal form in $\langle X \rangle \cong F_\tau[1]$, if necessary. Mapping over to $K$, we obtain 
\[\phi(w) = \phi(t)^{-a} \phi(g') \phi(t)^b = x_0^a \phi(g') x_0^{-b},\]
where the subword $\phi(g')$ lies in $F_\tau[1]$ and is written in its (unique) normal form, not involving the letter $x_0$. In particular, the word $x_0^a \phi(g') x_0^{-b} \in K \subseteq F_\tau$ is given in a normal form as in \cref{seminormal}. 

Suppose first that $x_0^a \phi(g') x_0^{-b}$ is already in normal form, see \cref{seminormal}. Since $1 = \phi(w) = x_0^a \phi(g') x_0^{-b}$ by assumption, the above considerations imply that $g' = 1$ and $a = b$, whence $w$ is trivial in $L$. 

If $x_0^a \phi(g') x_0^{-b}$ is not in normal form, then $\phi(g')$ has no occurrences of the letters $x_1$ or $y_1$. We can assume that $a \geq b$. Hence $x_0^a \phi(g') x_0^{-b}=x_0^{a-b}\phi(g'[b]),$ where $g'[b]$ denotes the word $g'$ with the indices of the $x_i$ and $y_i$ increased by $b.$ This is now in normal form as in \cref{seminormal}, and as above it means that $g'[b]=1,$ hence $g'=1,$ and that $a-b=0.$ Again, $w$ is trivial in $L.$
This finishes the proof.
\end{proof}

We can finally adapt the calculations for Thompson's group $F$ as in \cite{SigmaF} to compute the Sigma invariants for $F_\tau$.

\begin{citing}[{\cite[Theorem 2.1]{SigmaF}}] \label{points}
Let $G$ decompose as an ascending HNN extension $H\ast_{t,\sigma}$. Let $\chi$ be a character such that $\chi(H)=0$, $\chi(t)=1$.

\begin{itemize}

\item Suppose $H$ is of type $\tFn$, then $[\chi] \in \Sigma^n(G)$. 
\item Suppose $H$ is of type $\tFPn$, then $[\chi] \in \Sigma^n(G;\Z)$.
\item If $H$ is finitely generated and $\sigma$ is not surjective, then $[-\chi] \notin \Sigma^1(G)$.
\end{itemize}
\end{citing}

\begin{lemma} \label{lem:plusminuslambdarho} Let $\lambda$ and $\rho$ be the characters defined at the beginning of this section. Then
\[ [\lambda], [\rho] \in \Sigma^{\infty}(K) \cap \Sigma^{\infty}(F_\tau) \quad \text{ and } \quad  [-\lambda], [-\rho] \notin \Sigma^1(K) \cup \Sigma^1(F_\tau),\]
\[ [\lambda], [\rho] \in \Sigma^{\infty}(K;\Z) \cap \Sigma^{\infty}(F_\tau;\Z) \text{ and } [-\lambda], [-\rho] \notin \Sigma^1(K;\Z) \cup \Sigma^1(F_\tau;\Z).\]
\end{lemma}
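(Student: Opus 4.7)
The strategy is to first establish everything for $K$ and $\lambda$ via the HNN decomposition of \cref{lem:HNN} together with \cref{points}, then pass to $F_\tau$ using the finite-index transfer Theorems~\ref{htpy-fi}--\ref{hom-fi}, and finally use a reflection automorphism of $F_\tau$ swapping $\lambda$ and $\rho$ to recover the claims for $\pm\rho$, pulling them back to $K$.

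From the formulas in~\eqref{generators} we read off that $\lambda(x_0) = -2$ and that $\lambda(x_i) = \lambda(y_i) = 0$ for every $i \geq 1$ (since each such generator is the identity on $[0, 1-\tau^i]$). Hence $\lambda|_{F_\tau[1]} \equiv 0$, and under the isomorphism $K \cong F_\tau[1] \ast_{t, \sigma}$ of \cref{lem:HNN}, with $t \mapsto x_0^{-1}$, we have $\lambda|_K = 2\chi$ for the canonical HNN character $\chi$ satisfying $\chi(F_\tau[1]) = 0$ and $\chi(t) = 1$; in particular $[\lambda] = [\chi] \in S(K)$. Since $F_\tau[1] \cong F_\tau$ is of type $\tFm{\infty}$ (and thus of type $\tFPm{\infty}$) by \cref{FtauFinfty}, and since $\sigma(F_\tau[1]) = F_\tau[2] \subsetneq F_\tau[1]$ is not surjective, \cref{points} yields $[\lambda] \in \Sigma^\infty(K) \cap \Sigma^\infty(K;\Z)$ and, using~\eqref{homotopy}, $[-\lambda] \notin \Sigma^1(K) = \Sigma^1(K;\Z)$.

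By \cref{K} we have $|F_\tau : K| = 2$ and $r_0((F_\tau)_{ab}) = r_0(K_{ab}) = 2$, so \cref{htpy-fi} and \cref{hom-fi} apply; since $\lambda(x_0) = -2$ and $\rho(x_0) = 1$ with $x_0 \in K$, neither character vanishes on $K$, and hence $\iota^\ast$ sends $[\pm\lambda]_{F_\tau} \mapsto [\pm\lambda]_K$ and $[\pm\rho]_{F_\tau} \mapsto [\pm\rho]_K$. Transferring the results of the previous paragraph then yields the corresponding statements for $\pm\lambda$ in $F_\tau$.

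To handle $\rho$, consider the involution $\phi : F_\tau \to F_\tau$ defined by $\phi(f)(x) := 1 - f(1-x)$. Its slopes are those of $f$ (hence lie in $\langle \tau \rangle$) and its breakpoints are the $y \mapsto 1-y$ images of those of $f$, which preserves $\Z[\tau]$; a quick check shows $\phi$ is a group homomorphism and an involution, so $\phi \in \mathrm{Aut}(F_\tau)$. A direct computation gives $\phi(f)'(0) = f'(1)$, so $\lambda \circ \phi = \rho$, and the induced action of $\phi$ on $S(F_\tau)$ swaps $[\pm\lambda]$ with $[\pm\rho]$; since Sigma invariants are $\mathrm{Aut}(F_\tau)$-invariant, we obtain the analogous statements for $\pm\rho$ in $F_\tau$, and a final application of $\iota^\ast$ brings them down to $K$. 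The main technical inputs are the HNN decomposition \cref{lem:HNN} and the character computation for $\lambda$; the rest combines the methods of~\cite{SigmaF} with the finite-index transfer developed in \cref{3}.
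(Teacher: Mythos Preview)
Your proof is correct and follows essentially the same route as the paper: apply \cref{points} to the HNN decomposition of $K$ from \cref{lem:HNN} to handle $[\pm\lambda]$, transfer to $F_\tau$ via the finite-index results of \cref{3}, then use the reflection automorphism (which the paper calls $\nu$, conjugation by $t\mapsto 1-t$) to swap $\lambda\leftrightarrow\rho$, and pull back to $K$. The only differences are cosmetic---you compute $\lambda(x_0)=-2$ explicitly from~\eqref{generators} and write out the automorphism formula, while the paper phrases these more briefly.
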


\begin{proof} We begin by determining the result for $[\lambda]$ and $[-\lambda]$. The support of $F_\tau[1]$ lies in $[\tau^2, 1]$ and hence $\lambda(F_\tau[1]) = 0.$ The slope of $x_0$ at $0$ is $\tau^{-2}$. Hence, taking the character $\chi:=\frac{1}{2}\lambda \in [\lambda]$, we obtain $\chi(t) = 1$. We can thus apply \cref{points} to conclude that $[\lambda] \in \Sigma^\infty(K)$ and $[-\lambda] \notin \Sigma^1(K)$. 
By \cref{htpy-fi}, it follows that $[\lambda] \in \Sigma^\infty(F_\tau)$ and $[-\lambda] \notin \Sigma^1(F_\tau)$.

 As in \cite[Section 1.4]{SigmaF} we now consider a specific automorphism $\nu$ of $F_\tau$ to clear the case of $\rho$. Viewing the group $F_\tau$ as a group of PL homeomorphisms of the unit interval, $\nu$ is given by conjugation by 
$t \mapsto 1-t.$  This induces a homeomorphism of the character sphere that in particular swaps $[\lambda]$ with $[\rho]$, and also $[-\lambda]$ with $[-\rho]$,  thus proving the lemma for $F_\tau$.  A further application of \cref{htpy-fi} also yields the result for $K$. 

The homological variant of the lemma follows similarly; see also \cref{homotopy}.
\end{proof}

We shall now consider the arcs between $[-\lambda]$ and $[-\rho]$ on the character sphere $S(F_\tau)$. Since $[-\lambda]$ and $[-\rho]$ are not antipodal points, there is a unique (closed) geodesic segment in $S(F_\tau)$ connecting them, which we denote by $\mathrm{conv}([-\lambda],[-\rho])$. In the other direction, there is a unique local geodesic from $[-\lambda]$ and $[-\rho]$, which we call the \emph{long arc}, whose union with $\mathrm{conv}([-\lambda],[-\rho])$ yields the great circle in $S(F_\tau)$ containing $[-\lambda]$ and $[-\rho]$, in particular in this one-dimensional case, this is just  $S(F_\tau)$ itself. We will need the following:

\begin{citing}[{\cite[Theorem 2.3]{SigmaF}}] \label{long}
Let $G$ decompose as an ascending HNN extension $G=H*_{t,\sigma}$. Let $\chi$ be a character of $G$ such that $\chi_{|H} \neq 0$. If $H$ is of type $\mathtt{F}_{\infty}$ and $\chi|_H \in \Sigma^{\infty}(H)$, then $\chi \in \Sigma^{\infty}(G)$.
\end{citing}

\begin{proposition} \label{prop:almosteverything} All of $S(F_\tau)$, except possibly the closed geodesic 
\[\mathrm{conv}([-\lambda],[-\rho]),\] 
lies in $\Sigma^\infty(F_\tau)$ and in $\Sigma^\infty(F_\tau,\Z)$.
\end{proposition}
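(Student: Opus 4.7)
The plan is to reduce the computation to the finite-index subgroup $K \leq F_\tau$. Since $r_0(K) = r_0(F_\tau) = 2$ by \cref{K}, Theorems~\ref{htpy-fi} and~\ref{hom-fi} identify $\Sigma^n(F_\tau)$ with $\Sigma^n(K)$ via the restriction map $\iota^\ast : S(F_\tau) \to S(K)$, and likewise for the homological invariants. Using the basis $\{\lambda, \rho\}$ to identify $\Hom(F_\tau, \R) \cong \R^2$, the open long arc $S(F_\tau) \setminus \mathrm{conv}([-\lambda],[-\rho])$ consists precisely of classes $[\chi]$ with $\chi = a\lambda + b\rho$ for $(a,b) \in \R^2$ satisfying $a > 0$ or $b > 0$. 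I would split into these two overlapping subcases.

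For the subcase $b > 0$, I would exploit the HNN decomposition $K \cong F_\tau[1] \ast_{x_0^{-1}, \sigma}$ from \cref{lem:HNN}. Because every element of $F_\tau[1]$ is supported in $[1-\tau,1] = [\tau^2,1]$, the character $\lambda$ vanishes on $F_\tau[1]$, so $\chi|_{F_\tau[1]} = b \cdot \rho|_{F_\tau[1]}$ is a nonzero positive multiple of $\rho|_{F_\tau[1]}$. Under the isomorphism $\sigma : F_\tau \to F_\tau[1]$, the character $\rho|_{F_\tau[1]}$ pulls back to $\rho$ itself, since each generator $\sigma(x_i)=x_{i+1}$ and $\sigma(y_i)=y_{i+1}$ of $F_\tau[1]$ has the same slope $\tau$ at $1$ as its preimage. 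Since $[\rho] \in \Sigma^\infty(F_\tau)$ by \cref{lem:plusminuslambdarho}, \cref{long} applied to $K \cong F_\tau[1]\ast_{t,\sigma}$ yields $[\chi] \in \Sigma^\infty(K)$, and \cref{htpy-fi} then returns $[\chi] \in \Sigma^\infty(F_\tau)$.

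For the subcase $a > 0$, I would invoke the involutive automorphism $\nu$ of $F_\tau$ induced by conjugation with the reflection $t \mapsto 1-t$, already used in the proof of \cref{lem:plusminuslambdarho}. It swaps $[\lambda] \leftrightarrow [\rho]$, so the induced homeomorphism $\nu^\ast$ of $S(F_\tau)$ sends $[a\lambda + b\rho]$ to $[b\lambda + a\rho]$ and preserves $\Sigma^\infty(F_\tau)$ setwise by functoriality. Applying $\nu^\ast$ to the character classes handled in the previous paragraph covers all $[\chi]$ with $a > 0$; the two subcases together exhaust the open long arc.

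The homological statement $[\chi] \in \Sigma^\infty(F_\tau, \Z)$ then follows immediately from the inclusion $\Sigma^n(G) \subseteq \Sigma^n(G,\Z)$ of \cref{homotopy}, so no separate argument is required. The main delicate point, in my view, is the sign bookkeeping in the first subcase: one must verify that $\rho|_{F_\tau[1]}$ corresponds under $\sigma$ to a \emph{positive} multiple of $\rho$, since the opposite sign would land us on $[-\rho] \notin \Sigma^1(F_\tau)$ and render \cref{long} inapplicable. It is precisely this sign asymmetry of the available HNN decomposition that forces the companion subcase $a > 0$ to be handled via $\nu$ rather than with a further direct HNN computation on $K$.
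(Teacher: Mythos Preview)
Your argument is correct and matches the paper's proof essentially step for step: the same HNN decomposition of $K$, the same observation that $\lambda$ vanishes on $F_\tau[1]$ so that $\chi|_{F_\tau[1]}$ is a positive multiple of $\rho|_{F_\tau[1]}$ when $b>0$, the same transfer via the isomorphism $F_\tau[1]\cong F_\tau$ and \cref{long}, and the same use of the reflection $\nu$ for the symmetric half. The only cosmetic difference is that the paper phrases the case split as ``$\chi(x_1)>0$'' (which, since $\lambda(x_1)=0$ and $\rho(x_1)=1$, is exactly your condition $b>0$) rather than working directly with the coefficients $(a,b)$.
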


\begin{proof}
Again we use our previous expression of the subgroup $K$ as an HNN extension of $H=F_{\tau}[1]$. By \cref{lem:plusminuslambdarho}, we know that $[\rho] \in \Sigma^{\infty}(K) \cap \Sigma^{\infty}(F_\tau)$. Now let $\chi \in \Hom(F_\tau, \R)$ be arbitrary. We claim that 
\begin{equation} \label{claim:x1positive}
\chi(x_1) > 0 \iff \chi_{|H} \in [\rho|_H].
\end{equation} 
In effect, $\chi = r\lambda + s\rho$ for some (unique) $r,s \in \R$ as $\lambda$ and $\rho$ are linearly independent and $\dim_\R(\Hom(F_\tau,\R))=2$. Since $\lambda(x_1) = \lambda(y_1) = 0$ and $a_j = a_0 a_{j+1} a_0^{-1}$ for any $j \geq 1$ and $a \in \{x,y\}$, it follows that $\chi(w) = s \rho(w)$ for any $w \in H = F_\tau[1]$. This means that $\chi|_H \in \{[\rho|_H],[-\rho|_H]\}$. Finally, $\rho(x_1)=1$ implies that $\chi(x_1) = s$, whence $\chi(x_1) > 0$ if and only if $\chi_{|H} \in [\rho|_H]$. 

From here, we highlight that $H=F_{\tau}[1]$ is isomorphic to $F_{\tau}$, via the isomorphism $\gamma$ such that $\gamma(x_i)=x_{i-1}$ and $\gamma(y_i)=y_{i-1}$ for $i \geq 1$. The homeomorphism $S(F_\tau[1]) \cong S(F_\tau)$ induced by $\gamma$ maps $[\rho|_H]$ to $[\rho]$. As $[\rho] \in \Sigma^{\infty}(F_{\tau})$, this means $[\rho|_H] \in \Sigma^{\infty}(F_{\tau}[1])$. In particular, if $\chi \in \Hom(F_\tau,\R)$ is positive on $x_1$, Claim~\eqref{claim:x1positive} yields $[\chi_{|H}] = [\rho|_H] \in \Sigma^{\infty}(F_{\tau}[1])$. From here, we can apply \cref{long} to conclude that $[\chi_{|K}] \in \Sigma^{\infty}(K)$. Thus, $\chi(x_1)>0 \implies [\chi_{|K}] \in \Sigma^{\infty}(K)$, whence $[\chi] \in \Sigma^{\infty}(F_{\tau})$ by \cref{htpy-fi}.

A straightforward computation shows that any character $\chi$ on the straight line from $\lambda$ to $\rho$ in $\Hom(F_\tau,\R)$ satisfies $\chi(x_1) > 0$. The same holds for any character on the straight line from $\rho$ to $-\lambda$.
Hence, we have that the open arc in $S(F_{\tau})$ from $[\lambda]$ to $[-\lambda]$ that contains $[\rho]$ actually lies in $\Sigma^{\infty}(F_{\tau})$. Arguing again with the symmetry in $S(F_\tau)$ given by the automorphism $\nu$ induced by conjugation with $t \mapsto 1-t$, we conclude that the open arc from $[\rho]$ to $[-\rho]$ containing $[\lambda]$ is also in $\Sigma^{\infty}(F_\tau)$. Altogether, the long (open) arc from $[-\lambda]$ to $[-\rho]$ is in $\Sigma^{\infty}(F_{\tau})$, as claimed. The homological version follows directly from \cref{homotopy}. 
\end{proof}

\noindent It now remains to consider the remaining short arc $\mathrm{conv}([-\lambda],[-\rho])$. To do this we will follow the approach of \cite[Section 2.3]{SigmaF}. We need the following two results:

\begin{citing}[{\cite[Corollary 1.2]{SigmaF}}]\label{kernel}
The kernel of a nonzero discrete character $\chi$ has type $\tFPn$  over the ring $R$ if and only if both $[\chi]$ and $[-\chi]$ lie in $\Sigma^n(G,R)$. 
\end{citing}

\begin{citing}[{\cite[Theorem 2.7]{SigmaF}}] \label{FP2}
Assume $G$ contains no nonabelian free subgroups and is of type $\tFPm2$ over a ring $R$. Let $\widetilde{\chi}: G \to \R$ be a nonzero discrete character. Then $G$ decomposes as an ascending HNN extension $H*_{t, \sigma}$, where $H$ is a finitely generated subgroup of $\ker(\widetilde{\chi})$ and $\widetilde{\chi}(t)$ generates the image of $\widetilde{\chi}$.
\end{citing}

\begin{proposition} Let $R$ be a ring. Then 
\[\mathrm{conv}([-\lambda],[-\rho])\cap \Sigma^2(F_{\tau},R) = \leer.\]
\end{proposition}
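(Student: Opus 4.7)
My plan is to adapt the strategy of \cite[Section~2.3]{SigmaF} to $F_\tau$, combining the HNN decomposition from \cref{FP2} with the dichotomy of \cref{points}. Suppose, for contradiction, that $[\chi] \in \mathrm{conv}([-\lambda],[-\rho]) \cap \Sigma^2(F_\tau, R)$. By \cref{lem:plusminuslambdarho}, the endpoints $[-\lambda], [-\rho]$ are already ruled out, so after rescaling we may write $\chi = -a\lambda - b\rho$ with $a, b > 0$. Since $\Sigma^2(F_\tau, R)$ is an open subset of $S(F_\tau)$ (see \cite{Homology}) and discrete characters are dense in the character sphere, a small perturbation allows us to assume that $\chi$ itself is a nonzero discrete character still strictly interior to $\mathrm{conv}([-\lambda],[-\rho])$.

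Next, I produce an ascending HNN decomposition. Since $-\chi$ lies on the open long arc from $[-\lambda]$ to $[-\rho]$, \cref{prop:almosteverything} yields $[-\chi] \in \Sigma^\infty(F_\tau, R) \subseteq \Sigma^2(F_\tau, R)$. Applying \cref{kernel}, the kernel $\ker\chi$ has type $\tFPm{2}$ over $R$. Moreover, $F_\tau$ is a subgroup of the group of piecewise linear orientation-preserving homeomorphisms of $[0,1]$, so by the theorem of Brin--Squier it contains no nonabelian free subgroup. Therefore \cref{FP2} applies to $F_\tau$ and the discrete character $\chi$, yielding an ascending HNN decomposition $F_\tau = H *_{t, \sigma}$ with $H \leq \ker\chi$ finitely generated and $\chi(t)$ generating $\chi(F_\tau)$.

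Now I analyse the monomorphism $\sigma$. If $\sigma$ is not surjective, then \cref{points} forces $[-\chi] \notin \Sigma^1(F_\tau)$, contradicting $[-\chi] \in \Sigma^\infty \subseteq \Sigma^1$. Hence $\sigma$ is surjective, so that $F_\tau \cong H \rtimes \langle t \rangle$ with $H = \ker\chi$ finitely generated (and of type $\tFPm{2}$ over $R$). The hard part is ruling out this final semidirect-product case. My plan is to combine the explicit formulae for the generators in \cref{generators} with the normal form of \cref{seminormal} and the PL dynamics of $t$ at the fixed endpoints $0$ and $1$, in order to show that any such splitting of $F_\tau$ forces $\chi$ to vanish either on $F_\tau[1] = \langle x_1, y_1, x_2, y_2, \ldots\rangle$ or on its image under the flip involution $x \mapsto 1 - x$. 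By the computation at the start of this section, this would force $\chi$ to be a positive multiple of $-\lambda$ or $-\rho$, contradicting the assumption that $\chi$ lies strictly in the interior of the short arc.
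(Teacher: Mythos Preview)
Your reduction is sound up to the final paragraph, but the proof is incomplete and the sketched plan there does not work. You correctly reduce (via \cref{FP2} applied to $F_\tau$ and the dichotomy from \cref{points}) to the case where $F_\tau \cong \ker\chi \rtimes \langle t\rangle$ with $\ker\chi$ finitely generated and of type $\tFPm{2}$. However, your proposed conclusion --- that such a splitting forces $\chi$ to vanish on $F_\tau[1]$ or on its image under the flip, hence $[\chi]\in\{[\pm\lambda],[\pm\rho]\}$ --- is false. Indeed, for any discrete $\chi$ in the interior of the \emph{long} arc one also has $[\chi],[-\chi]\in\Sigma^\infty(F_\tau)$ by \cref{prop:almosteverything}, so $\ker\chi$ is of type $\tFm{\infty}$ and $F_\tau\cong\ker\chi\rtimes\Z$ as well; yet such $\chi$ does not vanish on $F_\tau[1]$ or its flip. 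Thus the semidirect-product structure alone cannot yield the contradiction you seek, and no argument using ``the PL dynamics of $t$'' is supplied.

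The paper's proof diverges precisely at this point and supplies the missing idea: rather than applying \cref{FP2} to $F_\tau$ with $\chi$, it applies \cref{FP2} to the group $G=\ker\chi$ with the \emph{auxiliary} discrete character $\lambda|_G$ (whose kernel in $G$ is $\sqrt{F_\tau'}$). This yields $G=H\ast_{t_0,\sigma}$ with $H$ a finitely generated subgroup of $\sqrt{F_\tau'}$. Since elements of $\sqrt{F_\tau'}$ have support bounded away from $0$ and $1$ while $\sqrt{F_\tau'}=\bigcup_n t_0^{-n}Ht_0^{n}$ has full support $(0,1)$, the stable letter $t_0$ is forced to contract near $0$ and expand near $1$, constraining the signs of $\lambda(t_0)$ and $\rho(t_0)$ and hence of $a$ and $b$. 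It is this second application of \cref{FP2} --- to $\ker\chi$ rather than to $F_\tau$ --- together with the support argument that your proposal is missing.
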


\begin{proof}
It suffices to show that no discrete character $\chi \in \mathrm{conv}([-\lambda],[-\rho])$ lies in $\Sigma^2(F_{\tau},R)$ because such characters are dense in $\mathrm{conv}([-\lambda],[-\rho])$ and $\Sigma^2(F_{\tau},R)$ is open; see, e.g., \cite[Proposition~2.9]{SigmaF}. Observe further that $[-\lambda]$, $[-\rho] \notin \Sigma^2(F_{\tau},R)$ by \cref{lem:plusminuslambdarho}. 

So let $\chi$ be a discrete character of the form $\chi = a \lambda + b \rho$, with $a,b \in \mathbb{Q} \setminus\{0\}$. Using the elements $f,g \in F_\tau$ of \cref{li-example}, we can construct elements $t \in F_{\tau}$ with the following properties: 
\begin{equation} \label{eq:constructingt}
\lambda(t)=mb \quad \text{ and } \quad \rho(t)=-ma \text{ for some } m \in \mathbb{Q} \setminus \{0\}.
\end{equation}

In particular, $\chi(t) = 0$. Since $\lambda$ has discrete image in $\R$ and $a \neq 0$, there exists $t_0$ satisfying condition~\eqref{eq:constructingt} such that $|\lambda(t_0)|$ is minimal among all elements $t$ fulfilling the properties listed in~\eqref{eq:constructingt}. Moreover, $\lambda(t_0) \neq 0$ for otherwise $t_0$ would not fulfill~\eqref{eq:constructingt}.

Let $G = \ker(\chi)$. Then, since the abelianisation of $F_\tau$ is $\Z^2 \times \Z/2\Z,$ we have that $G = \langle \sqrt{F'_\tau}, t_0 \rangle = \sqrt{F'_\tau} \rtimes \langle t_0 \rangle$, where $\sqrt{F'_{\tau}} := \{f \in F_{\tau} \mid f^n \in F_{\tau}' \text{ for some } n\}$. Note that $\lambda|_G$ is a discrete nonzero character vanishing on the subgroup $\sqrt{F'_\tau} \leq G$ and such that $\im(\lambda|_G)$ is generated by $\lambda(t_0)$. 

Now suppose $G$ has type $\tFPm2$ over a ring $R$. By \cref{FP2},  we can decompose $G$ as the HNN extension $H*_{t, \sigma}$, where $H$ is a finitely generated subgroup of $\sqrt{F'_{\tau}}$. As $H$ is generated by a finite set of elements of $F_{\tau}$, and each generator has support away from $0$ and $1$, there exists a value $\varepsilon'' > 0$ such that all elements of $H$ are supported in the interval $[\varepsilon'', 1-\varepsilon'']$. Similarly, as $t_0$ has finitely many breakpoints, there is a value $\varepsilon' > 0$ such that $t_0$ is linear on the intervals $[0, \varepsilon']$ and $[1-\varepsilon', 1]$. Let $\varepsilon = \mathbf{min}\{\varepsilon', \varepsilon''\}$, giving us a value with both of these properties. 

Since $\sqrt{F'_\tau} \rtimes \langle t_0 \rangle = G \cong H*_{t, \sigma}$, we can say that $\sqrt{F'_{\tau}} = \bigcup_{n \geq 1} t^n H t^{-n}$. Hence for each $f \in \sqrt{F'_{\tau}}$, there is a value $n$ such that $t^{-n} f t^n \in H$, hence $t^{-n} f t^n$ is supported in $[\varepsilon, 1-\varepsilon]$. From here, we can see that any $f \in \sqrt{F'_{\tau}}$ must be supported in $[t_0^n(\varepsilon), t_0^n(1-\varepsilon)]$ for some $n$. As $\sqrt{F'_{\tau}}$ has support $(0,1)$, we can see that $(t_0^n(\varepsilon))_{n\in\N}$ must have a subsequence that converges to $0$ and $(t_0^n(1-\varepsilon))_{n\in\N}$ must have a subsequence that converges to $1$. As $t_0$ is linear on the intervals $[0, \varepsilon]$ and $[1-\varepsilon, 1]$, it holds $t_0(\varepsilon) < \varepsilon$ and $t_0(1-\varepsilon) > 1-\varepsilon$. Hence $t_0$ must have slope smaller than $1$ near $0$ and slope bigger than $1$ near $1$. Therefore $ab <0$. Given that we started with the assumption that $G=\ker(\chi)$ was of type $\tFPm2$, we obtain the implication 
\[ \chi = a\lambda + b\rho \quad \text{ and } \quad \ker(\chi) \text{ of type } \tFPm2 \implies ab < 0 \]
whenever $a,b \in \Q\setminus\{0\}$. The contrapositive of this is that $ab>0$ implies $\ker(\chi)$ is not of type $\tFPm2$. Combining this with \cref{kernel}, we see that we cannot have both $[\chi]$ and $[-\chi]$ in $\Sigma^2(F_{\tau},R)$. In particular, if the antipodal point $[-\chi]$ lies in $\Sigma^2(F_\tau,R)$, then by \cref{prop:almosteverything} we have that $[\chi] \not\in \Sigma^2(F_{\tau},R)$.

Transfering this result to the homotopical invariant with the use of \cref{homotopy}, we conclude that if $[\chi] \not\in \Sigma^2(F_{\tau},R)$, then $[\chi] \not\in \Sigma^2(F_{\tau})$. 
\end{proof}

This finishes off the proof of \cref{main-thm}. \qed

\bibliographystyle{plain}
\bibliography{references}

\def\cprime{$'$}
\begin{thebibliography}{10}

\bibitem{BieriDimension}
Robert Bieri.
\newblock {\em Homological dimension of discrete groups}.
\newblock Queen Mary College Mathematical Notes. Queen Mary College, Department
  of Pure Mathematics, London, 2 edition, 1981.

\bibitem{SigmaF}
Robert Bieri, Ross Geoghegan, and Dessislava~Hristova Kochloukova.
\newblock The {S}igma invariants of {T}hompson's group {$F$}.
\newblock {\em Groups Geom. Dyn.}, 4(2):263--273, 2010.

\bibitem{BNS}
Robert Bieri, Walter~D. Neumann, and Ralph Strebel.
\newblock A geometric invariant of discrete groups.
\newblock {\em Invent. Math.}, 90(3):451--477, 1987.

\bibitem{Homology}
Robert Bieri and Burkhardt Renz.
\newblock Valuations on free resolutions and higher geometric invariants of
  groups.
\newblock {\em Comment. Math. Helv.}, 63(3):464--497, 1988.

\bibitem{BieriStrebelMetabelianSigma}
Robert Bieri and Ralph Strebel.
\newblock Valuations and finitely presented metabelian groups.
\newblock {\em Proc. London Math. Soc. (3)}, 41(3):439--464, 1980.

\bibitem{Bieri}
Robert Bieri and Ralph Strebel.
\newblock Geometric invariants of discrete groups.
\newblock Preprint of a monograh, Universit{\"a}t Frankfurt, 1992.

\bibitem{BrownKen}
Kenneth~S. Brown.
\newblock {\em Cohomology of groups}, volume~87 of {\em Graduate Texts in
  Mathematics}.
\newblock Springer-Verlag, New York, 1994.

\bibitem{BrownGeoghegan}
Kenneth~S. Brown and Ross Geoghegan.
\newblock An infinite-dimensional torsion-free {${\rm FP}_{\infty }$} group.
\newblock {\em Invent. Math.}, 77(2):367--381, 1984.

\bibitem{Ftau}
Jos\'{e} Burillo, Brita Nucinkis, and Lawrence Reeves.
\newblock An irrational-slope {T}hompson's group.
\newblock {\em Publ. Mat.}, 65(2):809--839, 2021.

\bibitem{Cleary}
Sean Cleary.
\newblock Regular subdivision in {$\mathbb{Z}[\frac{1+\sqrt 5}{2}]$}.
\newblock {\em Illinois J. Math.}, 44(3):453--464, 2000.

\bibitem{Topics}
Pierre de~la Harpe.
\newblock {\em Topics in geometric group theory}.
\newblock Chicago Lectures in Mathematics. University of Chicago Press,
  Chicago, IL, 2000.

\bibitem{SamFImproved}
Sam~P. Fisher.
\newblock Improved algebraic fibrings.
\newblock Preprint, arXiv:2112.00397v2, 2022.

\bibitem{FriedlVidussi}
Stefan Friedl and Stefano Vidussi.
\newblock {BNS} invariants and algebraic fibrations of group extensions.
\newblock {\em J. Inst. Math. Jussieu}, 22(2):985--999, 2023.

\bibitem{DawidPolytopes}
Dawid Kielak.
\newblock The {B}ieri--{N}eumann--{S}trebel invariants via {N}ewton polytopes.
\newblock {\em Invent. Math.}, 219(3):1009--1068, 2020.

\bibitem{KobanWong}
Nic Koban and Peter Wong.
\newblock The geometric invariants of certain group extensions with
  applications to twisted conjugacy.
\newblock {\em Topology Appl.}, 193:192--205, 2015.

\bibitem{DesiConchitaSigmaBredon}
Dessislava~Hristova Kochloukova and Conchita Mart{\'i}nez-P{\'e}rez.
\newblock Sigma theory for {B}redon modules.
\newblock {\em Groups Geom. Dyn.}, 8(2):415--440, 2014.

\bibitem{DesiLuisBNSRBB}
Dessislava~Hristova Kochloukova and Luis Mendon{\c{c}}a.
\newblock On the {B}ieri--{N}eumann--{S}trebel--{R}enz
  {{\(\Sigma\)}}-invariants of the {B}estvina--{B}rady groups.
\newblock {\em Forum Math.}, 34(3):605--626, 2022.

\bibitem{DesiVidussi}
Dessislava~Hristova Kochloukova and Stefano Vidussi.
\newblock Higher dimensional algebraic fiberings of group extensions.
\newblock {\em J. London Math. Soc.}, 108(3):978--1003, 2023.

\bibitem{YashMattBNSR}
Yash Lodha and Matthew C.~B. Zaremsky.
\newblock The {BNSR}-invariants of the {L}odha--{M}oore groups, and an exotic
  simple group of type {{\(\mathrm{F}_\infty\)}}.
\newblock {\em Math. Proc. Camb. Philos. Soc.}, 174(1):25--48, 2023.

\bibitem{LyndonSchupp}
Roger~C. Lyndon and Paul~E. Schupp.
\newblock {\em Combinatorial group theory}.
\newblock Classics in Mathematics. Springer-Verlag, Berlin, 2001.
\newblock Reprint of the 1977 edition.

\bibitem{MMvW}
John Meier, Holger Meinert, and Leonard VanWyk.
\newblock Higher generation subgroup sets and the {$\Sigma$}-invariants of
  graph groups.
\newblock {\em Comment. Math. Helv.}, 73(1):22--44, 1998.

\bibitem{RenzThesis}
Burkhardt Renz.
\newblock Geometrische {I}nvarianten und {E}ndlichkeitseigenschaften von
  {G}ruppen.
\newblock Thesis, University of Frankfurt, 1988.

\bibitem{F23}
Robert Spahn and Matthew C.~B. Zaremsky.
\newblock The {BNSR}-invariants of the {S}tein group {$F_{2,3}$}.
\newblock {\em J. Group Theory}, 24(6):1149--1162, 2021.

\bibitem{StrebelNotes}
Ralph Strebel.
\newblock Notes on the sigma invariants.
\newblock Preprint, arXiv:1204.0214v2, 2013.

\bibitem{ZaremskySigmaThompsonHoughton}
Matthew C.~B. Zaremsky.
\newblock On the {{\(\Sigma\)}}-invariants of generalized {T}hompson groups and
  {H}oughton groups.
\newblock {\em Int. Math. Res. Not.}, 2017(19):5861--5896, 2017.

\bibitem{braidedVZaremskyGeometry}
Matthew C.~B. Zaremsky.
\newblock Geometric structures related to the braided {T}hompson groups.
\newblock {\em Math. Z.}, 300(3):2591--2610, 2022.

\end{thebibliography}

\end{document}